\newtheorem{theorem}{Theorem}[section]
\newtheorem{lemma}[theorem]{Lemma}
\newtheorem{corollary}[theorem]{Corollary}
\newtheorem{proposition}[theorem]{Proposition}
\newtheorem{theoremletter}{Theorem}
\newtheorem{corollaryletter}{Corollary}
 \theoremstyle{definition}
 \newtheorem{definition}[theorem]{Definition}
\newtheorem{question}{Question}
  \newtheorem*{example*}{Example}
\numberwithin{equation}{section}
\newcommand {\N}{\mathbb{N}} 
\newcommand {\Z}{\mathbb{Z}}
\newcommand{\PP}{\mathcal{P}}
\DeclareMathOperator{\CA}{CA}
\DeclareMathOperator{\Ker}{Ker}
\DeclareMathOperator{\End}{End}
\DeclareMathOperator{\Id}{Id}
\DeclareMathOperator{\Spec}{Spec}
\begin{document}
\title[Weakly surjunctive groups symbolic group varieties]{Weakly surjunctive groups and symbolic group varieties}   
\author[Xuan Kien Phung]{Xuan Kien Phung}
\email{phungxuankien1@gmail.com}
\subjclass[2010]{14A10, 14A15, 16S34, 20C07, 37B10, 68Q80}
\keywords{group ring, stable finiteness, sofic group, surjunctive group, algebraic group, cellular automata, symbolic group varieties, reversibility, invertibility} 
 
\begin{abstract}
In this paper, we introduce the classes of weakly surjunctive and linearly surjunctive  groups which include all sofic groups and more generally all surjunctive groups. We investigate various properties of such groups and establish in particular a reversibility and invertibility theorem for injective endomorphisms of symbolic group varieties over weakly surjunctive group universes with algebraic group alphabets in arbitrary characteristic. We also obtain  novel evidence related to Kaplansky's stable finiteness conjecture. 
\end{abstract}
\date{\today}
\maketitle
  
\setcounter{tocdepth}{1}
 
\section{Introduction} 
To state the main results, we fix some basic notions of symbolic dynamics. Fix a set $A$ called the \emph{alphabet}, and a group  $G$, the \emph{universe}.
A \emph{configuration} $c \in A^G$ is a map $c \colon G \to A$. 
The Bernoulli shift $G \times A^G \to A^G$ is defined by $(g,c) \mapsto g c$, 
where $(gc)(h) \coloneqq  c(g^{-1}h)$ for all $g,h \in G$ and $c \in A^G$. 
\par
Introduced by von Neumann \cite{neumann}, a \emph{cellular automaton} over  the group $G$ and the alphabet $A$ is a map
$\tau \colon A^G \to A^G$ admitting a finite \emph{memory set} $M \subset G$
and a \emph{local defining map} $\mu \colon A^M \to A$ such that 
\begin{equation*} 
\label{e:local-property}
(\tau(c))(g) = \mu((g^{-1} c )\vert_M)  \quad  \text{for all } c \in A^G \text{ and } g \in G.
\end{equation*} 
\par 
Now let $X$  be an algebraic group over a field $k$. Denote by  $X(k)$ the set of rational points of $X$. The ring  $CA_{algr}(G,X,k)$ of \emph{algebraic group  cellular automata} consists of cellular automata $\tau \colon X(k)^G \to X(k)^G$ which  admit a memory  $M\subset G$ and 
a local defining map $\mu \colon X(k)^M \to X(k)$ induced by some $k$-homomorphism of algebraic groups  
$f \colon X^M \to X$, i.e., $\mu=f\vert_{X(k)^M}$, 
where $X^M$ is the fibered product of copies of $X$ indexed by $M$. 
\par 
A cellular automaton $\tau \colon A^G \to A^G$ is \emph{reversible} if it is bijective and the inverse map $\tau^{-1} \colon A^G \to A^G$ is also a cellular automaton. It is well-known that if the alphabet $A$ is finite then every bijective cellular automaton $\tau \colon A^G \to A^G$ is reversible (cf., e.g. \cite[Theorem~1.10.2]{ca-and-groups-springer}). 
\par 
In \cite[Theorem~1.1]{phung-2020}, the author proved the following surjunctivity and invertibility result for endomorphisms of symbolic group varieties:  

\begin{theorem}
\label{t:phung-2020-sofic-surjunctive}
Let $G$ be a sofic group. 
Let $X$ be a connected algebraic group over an algebraically closed field $k$ of characteristic zero. 
Suppose that $\tau \in  CA_{algr}(G,X,k)$ is injective. 
Then $\tau$ reversible and $\tau^{-1} \in CA_{algr}(G,X,k)$.  
\end{theorem}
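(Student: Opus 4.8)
The plan is to separate the statement into a \emph{surjunctivity} part --- that the injective $\tau$ is automatically bijective --- and an \emph{inversion} part --- that, once bijective, $\tau^{-1}$ again belongs to $CA_{algr}(G,X,k)$. The first part is where soficity of $G$ enters, through a finite-approximation argument, while the second is purely algebro-geometric and is where the characteristic-zero hypothesis becomes indispensable.

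For surjunctivity I would argue by contradiction, adapting the Gromov--Weiss strategy for finite alphabets to the algebraic setting. Fix a memory set $M$ for $\tau$, a local $k$-homomorphism $f \colon X^M \to X$ defining it, and suppose $\tau$ is not surjective, so that some configuration misses a cylinder over a finite window. Soficity of $G$ gives, for every finite $\Omega \subset G$ and every $\varepsilon > 0$, a finite set $V$ together with an $(\Omega,\varepsilon)$-sofic map $\sigma \colon G \to \Sym(V)$. Over the large subset $V' \subseteq V$ of points whose $\Omega$-neighbourhood in $V$ looks like a ball in the Cayley graph of $G$, one copies $f$ at each vertex and assembles a morphism of algebraic groups $\tau_V \colon X^V \to X^{V'}$ modelling $\tau$ locally. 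The crucial transfer claims are: (i) injectivity of $\tau$ on $X(k)^G$ forces $\tau_V$ to be injective on $X(k)^V$ after deleting $O(\varepsilon |V|)$ of the coordinates (a pair of distinct patterns agreeing under $\tau_V$ would, by gluing along the sofic graph, yield two configurations of $X(k)^G$ with equal $\tau$-image); and (ii) non-surjectivity of $\tau$ forces the image $\overline{\tau_V(X^V)}$ to miss a cylinder, hence to have dimension at most $(|V| - c)\dim X$ for a fixed $c > 0$ independent of $\varepsilon$. Now invoke the algebro-geometric input: an injective morphism of $k$-varieties with $k$ algebraically closed has image of the same dimension as the source (otherwise a general fibre is positive-dimensional, hence infinite since $k$ is algebraically closed). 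Combining (i) with this, $\dim \overline{\tau_V(X^V)} \ge (|V| - O(\varepsilon |V|))\dim X$, which for $\varepsilon$ small enough contradicts (ii). Hence $\tau$ is surjective, and being injective it is bijective.

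For the inversion part I would work window by window using that an algebraic group over $k$ is smooth, hence normal. Since $\tau$ is bijective and $G$-equivariant, a routine argument produces a finite window over which $\tau$ is governed by a morphism of algebraic groups $\phi \colon X^E \to X^F$ that is injective and dominant on $k$-points, hence birational onto its (normal, reduced) image; in characteristic zero, Zariski's main theorem then promotes $\phi$ to an isomorphism onto its image. This is precisely the step that fails in characteristic $p$, where the relative Frobenius is a bijective algebraic cellular automaton whose set-theoretic inverse is not a morphism of varieties. Spreading the local inverse out $G$-equivariantly yields a finite memory set $M'$ for $\tau^{-1}$ and a local defining $k$-homomorphism $X^{M'} \to X$, so $\tau^{-1} \in CA_{algr}(G,X,k)$ and $\tau$ is reversible.

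The step I expect to be the main obstacle is the transfer claim (i) in the surjunctivity argument. Over the non-compact alphabet $X(k)$ the usual finite-alphabet compactness/diagonal argument is unavailable, so topological compactness must be replaced by Noetherianity of the Zariski topology: one has to track that the relevant images of $\tau$ restricted to windows are constructible (Chevalley), exploit irreducibility of $X^M$ coming from connectedness of $X$, and control the accumulation of boundary coordinates so that the dimension count of the previous paragraph genuinely forces surjectivity rather than merely a large image. Making the uniform bounds --- the constant $c$ in (ii) and the $O(\varepsilon |V|)$ error in (i) --- interact correctly is the technical heart of the proof.
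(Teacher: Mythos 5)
Your decomposition into surjunctivity plus algebraicity of the inverse, and your sofic\hbox{-}approximation/dimension\hbox{-}counting strategy for the first half, reproduce in outline the argument of \cite{phung-2020}, which is where the present paper takes Theorem~\ref{t:phung-2020-sofic-surjunctive} from: here the theorem is only quoted, and the paper's own new proof (of the stronger Theorem~\ref{t:surjunctive-invertible}, which drops connectedness of $X$) is genuinely different --- it spreads $\tau$ out over a finitely generated $\Z$-algebra, restricts to the finite group alphabets $H_{p,s,d}$ of closed points with finite residue fields, invokes surjunctivity of $G$ for finite group alphabets, and recovers surjectivity of the algebraic transition maps from the Jacobson property of schemes of finite type over $\Z$ together with Chevalley's theorem. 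Your inversion step is essentially that of \cite[Theorem~6.4]{phung-2020} (quoted here as Theorem~\ref{t:inverse-also}): the set-theoretic local inverse supplied by Lemma~\ref{l:techno-left-reversible} is algebraic because in characteristic zero a bijective homomorphism of algebraic groups is an isomorphism, and your Frobenius remark correctly locates the failure in positive characteristic.

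Two points in your surjunctivity sketch need repair. First, the justification of transfer claim (i) by ``gluing along the sofic graph'' does not work: patterns on the finite model $V$ cannot be glued into configurations on $G$, since $V$ is neither a subgraph nor a quotient of the Cayley graph. The correct mechanism is the finite-window left inverse of Lemma~\ref{l:techno-left-reversible}: injectivity yields a finite $N\subset G$ and a map recovering $x(1_G)$ from $\tau(x)\vert_N$; it is this local datum (whose existence over the infinite alphabet $X(k)$ is exactly the Noetherianity/constructibility issue you flag) that one transplants to the good vertices of $V$ to make the morphism $x \mapsto (x\vert_{\partial}, \tau_V(x)\vert_{V''})$ injective and hence bound $\dim \overline{\tau_V(X^V)}$ from below by $(|V|-O(\varepsilon|V|))\dim X$. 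Second, and more seriously, your claim (ii) bounds the image above by $(|V|-c)\dim X$ with $c$ a constant independent of $V$; this is compatible with the lower bound as soon as $|V|$ is large, so no contradiction results. The deficit must scale linearly in $|V|$: one tiles $V'$ by $\delta|V|$ disjoint translates of the bad window $E$, over each of which the image lies in the proper closed subgroup $\Gamma_E\subsetneq X^{E}$ and hence loses at least one dimension --- this is precisely where connectedness (irreducibility of $X^E$) is indispensable, and why the dimension count does not survive the passage to disconnected $X$ that Theorem~\ref{t:surjunctive-invertible} achieves by other means.
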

\par 
Here, the wide class of sofic groups was first introduced by Gromov \cite{gromov-esav} as a common generalization of residually finite groups and amenable groups. 
\par 
Let $G$ and $A$ be groups. Then $A^G$ admits a natural  group structure with pointwise operations: for $x,y \in A^G$, the product $z=xy$ is given by $z(g)=x(g)y(g)$ for all $g \in G$. We call a map $\tau \colon A^G \to A^G$ a \emph{group cellular automaton} if $\tau$ is a cellular automaton and that $\tau(xy)=\tau(x)\tau(y)$ for all configurations $x,y \in A^G$. Equivalently, it is clear that a cellular automaton $\tau \colon A^G \to A^G$ is a group cellular automaton if $\tau$ admits for some finite memory set $M \subset G$ a local defining map $\mu \colon A^M \to A$ which is a homomorphism of groups. 
\par 
In this paper, we obtain a generalization of Theorem~\ref{t:phung-2020-sofic-surjunctive} where we can eliminate the connectedness hypothesis on the alphabet  and the universe is now only required to be a \emph{weakly surjunctive group} defined as follows. 

\begin{definition} 
We call a group $G$ a \emph{weakly surjunctive group} if for every finite group alphabet $A$, all injective group  cellular automata $\tau \colon A^G \to A^G$ are  surjective. A group $G$ is \emph{surjunctive} if for every finite  alphabet $A$, all injective  cellular automata $\tau \colon A^G \to A^G$ are  surjective.   
\end{definition} 
\par  
Surjunctive groups were introduced by Gottschalk in \cite{gottschalk}.  
It is clear from the definition that every surjunctive group is weakly surjunctive. Note that every sofic group is surjunctive by the Gromov-Weiss theorem (\cite{gromov-esav}, \cite{weiss-sgds}). We show in Lemma~\ref{l:locally-surjunctive} that a group $G$ is weakly surjunctive if and only if so is every finitely generated subgroup of $G$. 
\par The main result of the paper is the following: 

\begin{theoremletter}
\label{t:surjunctive-invertible}
Let $G$ be a weakly surjunctive group and let   $X$ be an algebraic group over an uncountable algebraically closed field $k$. 
Suppose that $\tau~\in~ CA_{algr}(G,X,k)$ is injective. Then $\tau$ is reversible and if $k$ has characteristic zero, one has   $\tau^{-1} \in CA_{algr}(G,X,k)$. 
\end{theoremletter}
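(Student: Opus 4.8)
The plan is to reduce to a finitely generated universe, then to derive surjectivity (hence bijectivity) of $\tau$ by spreading the algebraic datum out over a finitely generated subring of $k$ and specializing to finite fields, where the hypothesis of weak surjunctivity applies directly, and finally to upgrade bijectivity to reversibility — and, in characteristic zero, to algebraicity of $\tau^{-1}$ — by Noetherian arguments on the inverse system of pattern group varieties attached to $\tau$.

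\emph{Reduction to $G$ finitely generated.} Let $M \subseteq G$ be a memory set of $\tau$ with local defining map $f \colon X^M \to X$ a $k$-homomorphism of algebraic groups, and put $H \coloneqq \langle M \rangle$. For each left coset $gH$ one has $g'M \subseteq gH$ for every $g' \in gH$, so $\tau$ preserves each factor of the decomposition $X(k)^G = \prod_{gH \in G/H} X(k)^{gH}$ and acts on it through a single $\tau_H \in CA_{algr}(H,X,k)$ with memory $M$ and local map $f$ (cf.\ \cite{ca-and-groups-springer}). Hence $\tau$ is injective, resp.\ surjective, bijective, reversible, if and only if $\tau_H$ is, and $\tau^{-1}\in CA_{algr}(G,X,k)$ if and only if $\tau_H^{-1}\in CA_{algr}(H,X,k)$; and $H$ is weakly surjunctive by Lemma~\ref{l:locally-surjunctive}. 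Thus I may assume that $G$ is finitely generated, hence countable.

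\emph{Surjectivity.} I would spread out the datum: choose a finitely generated subring $R \subseteq k$, an affine $R$-group scheme $\XX$ of finite type with $\XX_k \cong X$, and an $R$-homomorphism $\varphi\colon \XX^M \to \XX$ with $\varphi_k \cong f$; each $s \in \Spec R$ then yields $\tau_s \in CA_{algr}(G,\XX_s,\kappa(s))$ with memory $M$. The decisive input is the finiteness machinery of \cite{phung-2020} for algebraic group cellular automata (mutually erasable patterns, closed image, Noetherianity of algebraic subshifts; here the hypothesis that $k$ is uncountable enters, guaranteeing that injectivity of $\tau$ on $k$-points forces the induced morphisms of finite pattern varieties to have zero-dimensional fibres): it makes the following conditions on $s$ constructible on $\Spec R$ — $(\mathrm{i})$ $\tau$ over $\overline{\kappa(s)}$ is injective on $\overline{\kappa(s)}$-configurations, and, for each finite $\Omega \subseteq G$, $(\mathrm{ii}_\Omega)$ the pattern morphism on $\Omega$ induced by $\tau$ over $\overline{\kappa(s)}$ is surjective on geometric points. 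Since $\tau$ over $\overline{\kappa(\eta)}$ is a restriction of the injective $\tau$, condition $(\mathrm{i})$ holds at the generic point $\eta$, hence on a dense open $U \subseteq \Spec R$. As $R$ is finitely generated over $\Z$, $\Spec R$ is Jacobson; pick any closed point $\mathfrak m \in U$, with finite residue field $\F_q$. Then $\tau$ over $\overline{\F_q}$ is injective, and since $(\XX,\varphi)$ is defined over $\F_q$ it restricts, for every $n \geq 1$, to an injective group cellular automaton $\XX_{\mathfrak m}(\F_{q^n})^G \to \XX_{\mathfrak m}(\F_{q^n})^G$ over the \emph{finite group} $\XX_{\mathfrak m}(\F_{q^n})$. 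Weak surjunctivity of $G$ makes each such restriction surjective, whence — since any pattern on a finite $\Omega$ takes values in $\XX_{\mathfrak m}(\F_{q^n})$ for $n$ large — $(\mathrm{ii}_\Omega)$ holds at $\mathfrak m$, for every $\Omega$. As $\mathfrak m$ was an arbitrary closed point of $U$, each constructible locus $(\mathrm{ii}_\Omega)$ contains all closed points of $U$, so by the Jacobson property it contains $U$ and in particular $\eta$; thus the pattern morphism $X^{\Omega M}(k) \to X^\Omega(k)$ is surjective for every finite $\Omega$. Finally, for $y \in X(k)^G$ the fibres $\tau_\Omega^{-1}(y|_\Omega)$ are nonempty cosets of the Noetherian groups $\Ker \tau_\Omega \subseteq X^{\Omega M}$, and a Mittag--Leffler reduction to stable images of the restriction maps over the countable directed poset of finite $\Omega$ produces a compatible family, i.e.\ a configuration $x$ with $\tau(x)=y$. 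Hence $\tau$ is bijective.

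\emph{Reversibility.} I would assume $1_G \in M$ and, for finite $\Omega$, set $K_\Omega \coloneqq \Ker \tau_\Omega \subseteq X^{\Omega M}$; the restrictions $K_{\Omega'} \to K_\Omega$ form a projective system with $\varprojlim_\Omega K_\Omega = \Ker \tau = \{1\}$. Noetherianity of $X^M$ makes the images of the transition maps stabilize, and the reindexed system of stable images has surjective transitions and limit $\{1\}$, so each stable image is $\{1\}$. Taking $\Omega = \{1_G\}$ yields a finite $E \subseteq G$ (which I enlarge to contain $1_G$) such that the restriction $K_E \to X^M$ is trivial; equivalently every $z \in X(k)^G$ with $\tau(z)|_E = 1$ satisfies $z(1_G)=1$. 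Because $\tau$ is a group cellular automaton this means $\tau^{-1}(y)(1_G)$ depends only on $y|_E$, so $\tau^{-1}$ is a cellular automaton with memory set $E$ and $\tau$ is reversible. For the algebraicity of $\tau^{-1}$ when $\car k = 0$: let $\psi\colon X^E(k)\to X(k)$ be its local map, a homomorphism of abstract groups, and let $\tau_E\colon X^{EM}\to X^E$ be the $k$-homomorphism of algebraic groups induced by $f$. From $\tau^{-1}\circ\tau = \Id$ one gets $\operatorname{pr}_{1_G} = \psi\circ\tau_E$ on $X^{EM}(k)$, with $\operatorname{pr}_{1_G}$ the projection to the coordinate at $1_G$; hence $\operatorname{pr}_{1_G}$ is constant on the cosets of $\Ker\tau_E$. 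In characteristic zero $\Ker\tau_E$ is smooth, so $\operatorname{pr}_{1_G}$ is invariant under right translation by $\Ker\tau_E$ and factors as a $k$-morphism through the quotient $Q \coloneqq X^{EM}/\Ker\tau_E$; and the induced monomorphism $Q \to X^E$, being surjective by the previous step, is — in characteristic zero — an isomorphism of algebraic groups. Therefore $\psi$ is both a $k$-morphism of varieties and a group homomorphism, i.e.\ a $k$-homomorphism of algebraic groups $X^E\to X$, so $\tau^{-1}\in CA_{algr}(G,X,k)$. In positive characteristic the only failure is that this last monomorphism may be a nontrivial purely inseparable isogeny, which is precisely why the conclusion on $\tau^{-1}$ is not asserted there. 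I expect the main obstacle to be the setting‑up and exploitation of the constructibility, in families over $\Spec R$, of injectivity and of pattern‑surjectivity of algebraic group cellular automata — so as to route the argument through the finite residue fields and thereby reduce it to the purely group‑theoretic hypothesis of weak surjunctivity; the remaining steps are variants of the algebraic‑geometric methods of \cite{phung-2020} underlying Theorem~\ref{t:phung-2020-sofic-surjunctive}.
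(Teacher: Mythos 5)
Your overall architecture is the paper's: reduce to finitely generated $G$, spread the local defining homomorphism out over a finitely generated $\Z$-algebra $R$, specialize to finite residue fields to manufacture injective group cellular automata over \emph{finite group} alphabets where weak surjunctivity applies, recover surjectivity of the pattern maps over $k$ via the Jacobson property and Chevalley's theorem, and finish with Noetherian limit arguments. Your finite-field step (the groups $\XX_{\mathfrak m}(\F_{q^n})$ play the role of the paper's $H_{p,s,d}$), the Jacobson--Chevalley density argument, the Mittag--Leffler proof of surjectivity (the paper instead invokes the closed-image theorem \cite[Theorem~5.1]{phung-2020}), your stabilization proof of reversibility (this is exactly \cite[Lemma~6.1]{phung-2020}, i.e.\ Lemma~\ref{l:techno-left-reversible}), and your characteristic-zero quotient argument (this is \cite[Theorem~6.4]{phung-2020}, i.e.\ Theorem~\ref{t:inverse-also}) are all sound and track the paper, whether reproved or implicitly cited. (One small slip: $X$ need not be affine --- abelian varieties are allowed --- so drop ``affine'' from the spreading-out.)

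The gap is in the one step genuinely specific to this theorem: transferring injectivity from $k$ to the closed fibres. You assert that ``$\tau$ over $\overline{\kappa(s)}$ is injective'' is a \emph{constructible} condition on $s\in\Spec R$ and deduce a dense open $U$ from injectivity at the generic point. As stated this is unjustified: for a group cellular automaton, injectivity is the condition that \emph{some} finite-level locus $V_n=\{x\in X^{E_nM}\colon x(1_G)\neq e,\ \tau^+_{E_n}(x)=e^{E_n}\}$ is empty, i.e.\ a countable increasing union of constructible conditions on $s$, which is ind-constructible but not obviously constructible. Moreover your gloss on where uncountability enters (``forces the pattern morphisms to have zero-dimensional fibres'') is false: take $\tau=\Id$ with an oversized memory set, so that the pattern maps are projections with positive-dimensional fibres. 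The correct mechanism --- and the heart of the paper's proof --- is that uncountability of $k$, via the nonemptiness of countable projective limits of nonempty constructible subsets of varieties, shows that injectivity of $\tau$ forces $V_n=\varnothing$ for a \emph{single} $n$, equivalently a closed-subgroup inclusion $\Ker\tau^+_{E_n}\subseteq\Ker\pi_n$. One then chooses $R$ large enough that \emph{this} inclusion is defined over $R$; it therefore holds in every fibre, and injectivity of each finite-alphabet group cellular automaton is read off from the specialized inclusion together with $G$-equivariance and the group structure. Once you replace the constructibility claim by this finite-level witness (produced before $R$ is chosen), your argument closes up and coincides with the paper's.
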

\par 
As an immediate  application of the notion of weakly surjunctive groups, we mention here the following direct finiteness property for endomorphisms of symbolic group varieties: 

\begin{corollaryletter}
\label{t:direct-weak-surjunctive} 
Let $G$ be a weakly surjunctive group and let   $X$ be an algebraic group over an algebraically closed field $k$. 
Suppose that $\sigma, \tau \in CA_{algr}(G,X,k)$ satisfy 
 $\sigma \circ \tau= \Id$.  Then one also has $\tau  \circ \sigma = \Id$. 
\end{corollaryletter}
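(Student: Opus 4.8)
The plan is to reduce the statement to the defining property of a weakly surjunctive group, namely that injective group cellular automata over $G$ with finite group alphabet are surjective. The first step is to observe that $\sigma \circ \tau = \Id$ forces $\tau \colon X(k)^G \to X(k)^G$ to be injective; the only real content is to deduce $\tau \circ \sigma = \Id$, equivalently that $\tau$ is surjective (since then $\sigma$ is a two-sided inverse of the bijection $\tau$, as $\sigma$ is already a left inverse and $\tau \circ \sigma \circ \tau = \tau$ gives $\tau \circ \sigma = \Id$ after cancelling the bijection $\tau$ on the right). So everything comes down to showing $\tau$ is surjective.

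Next I would note that $\tau$, being in $CA_{algr}(G,X,k)$, is in particular a \emph{group} cellular automaton: its local defining map is a $k$-homomorphism of algebraic groups $f \colon X^M \to X$, hence a homomorphism of abstract groups on $k$-points, so $\tau$ is a homomorphism $X(k)^G \to X(k)^G$ for the pointwise group structure. The obstacle is that the alphabet $X(k)$ is in general infinite, whereas weak surjunctivity is formulated only for finite group alphabets; so I cannot invoke the definition of weak surjunctivity directly. The standard device to circumvent this — and I expect this to be the main technical point — is a \emph{finite subalphabet / restriction} argument: one shows that the failure of surjectivity of $\tau$ is witnessed on a finitely generated subgroup $H \leq G$ (using that $\tau$ has finite memory $M$, so if $c \notin \im \tau$ then its restriction to the subgroup generated by a suitable finite set already fails to be hit, cf. the usual patching/local-to-global arguments for cellular automata as in \cite{ca-and-groups-springer}), and then one restricts attention to a finite subgroup $B \leq X(k)$ of the alphabet that is preserved by $f$ on the relevant finite product, so that $\tau$ induces an injective group cellular automaton on $B^H$. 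Here one uses Lemma~\ref{l:locally-surjunctive}: $G$ weakly surjunctive implies $H$ weakly surjunctive, and then the definition applied to the finite group alphabet $B$ over $H$ forces surjectivity, contradicting the choice of $c$.

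An alternative, cleaner route — which I would actually prefer to present — is to sidestep the finite-subalphabet subtlety by first proving that $\tau$ is reversible via Theorem~A (which already handles injective elements of $CA_{algr}(G,X,k)$ over $G$ weakly surjunctive, once one reduces to $k$ uncountable and algebraically closed, e.g.\ by base change to a large field, which does not affect the relation $\sigma \circ \tau = \Id$ nor injectivity), obtaining that $\tau$ is bijective; then $\tau \circ \sigma = \Id$ follows purely formally. However, to keep the corollary genuinely elementary and independent of the uncountability hypothesis in Theorem~A, the intended argument is the direct one above: injectivity of $\tau$ plus weak surjunctivity (transported to a finite subalphabet over a finitely generated subgroup) yields surjectivity of $\tau$, hence bijectivity, hence $\tau \circ \sigma = \Id$. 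The one step requiring care is the passage to a finite group subalphabet stable under the local map $f$; for a general algebraic group $X$ this is handled by choosing the torsion subgroup generated by finitely many points together with the finitely many values of $f$ on products of these, which is finite because $f$ is a homomorphism and the ambient group of $k$-points, while infinite, has plenty of finite subgroups of this form — and if need be one passes to the subgroup of $X(k)$ generated by the coordinates of the offending configuration $c$ and its finitely many $\tau$-relevant translates, intersected appropriately to stay finite.
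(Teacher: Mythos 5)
Your opening reduction is fine: $\sigma\circ\tau=\Id$ gives injectivity of $\tau$, and once $\tau$ is surjective the identity $\tau\circ\sigma\circ\tau=\tau$ yields $\tau\circ\sigma=\Id$. Your ``alternative, cleaner route'' is also correct and is in fact one of the two proofs the paper gives: embed $k$ in an uncountable algebraically closed field $K$, note that $\sigma_K\circ\tau_K=\Id$, apply Theorem~\ref{t:surjunctive-invertible} to get bijectivity of $\tau_K$, hence $\tau_K\circ\sigma_K=\Id$, and restrict back to $X(k)^G$ (the paper does the descent at the level of morphisms via \cite[Lemma~7.2]{cscp-alg-ca}, but the set-theoretic restriction argument you gesture at also works since $X(k)^G$ is preserved by $\tau_K$ and $\sigma_K$). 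Had you committed to that route, the proof would be essentially complete.

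The problem is the argument you designate as ``the intended one'': the passage to a finite group subalphabet $B\leq X(k)$ stable under the local homomorphism $f$ and containing the values of the offending configuration. Such a $B$ does not exist in general. For $X=\mathbb{G}_a$ over $k=\mathbb{C}$ the group $X(k)=(\mathbb{C},+)$ is torsion-free, so its only finite subgroup is trivial; for $\mathbb{G}_m$ or an abelian variety in characteristic zero, the subgroup generated by finitely many points (e.g.\ the values $c(g)$, $g\in E$, of a configuration one wants to hit) is typically infinite, and the ``torsion subgroup generated by finitely many points together with the values of $f$'' is neither finite nor $f$-stable in any controllable way. This is precisely the obstruction that forces the paper's actual mechanism, which is of a different nature: by Lemma~\ref{l:model-finite-data} one spreads $X$, $\mu$, $\eta$ out to a model over a finitely generated $\Z$-algebra $R$, and the finite group alphabets are the sets $H_{p,s,d}$ of closed points of bounded residue degree in the positive-characteristic fibres $X_s$ over closed points $s\in\Spec R$ (Lemma~\ref{l:Z-closed-point}); these are finite subgroups of $X_s(\overline{\mathbb{F}_p})$, not of $X(k)$. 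Weak surjunctivity of $G$ is applied to the induced group cellular automata $\tau_{p,s,d}$ on these fibres, and surjectivity over $k$ is then recovered because the image of the model morphism contains all closed points of a Jacobson scheme and is constructible by Chevalley's theorem. Without this reduction-modulo-primes step (or the base-change route via Theorem~\ref{t:surjunctive-invertible}), your direct argument does not go through.
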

\par 
When $G$ is required to be surjunctive  while we only need to suppose that $\tau, \sigma \in CA_{alg}(G,X,k)$ (see, e.g.  \cite{cscp-alg-ca}, \cite{cscp-alg-goe}, \cite{cscp-invariant-ca-alg}, \cite{phung-geometric}), the statement of Corollary~\ref{t:direct-weak-surjunctive} was proven in \cite{phung-geometric}. In fact, a proof of Corollary~\ref{t:direct-weak-surjunctive} can be easily obtained by a straightforward modification of the proof of  \cite[Theorem~C]{phung-geometric}. In Section~\ref{s:proof-direct-finite}, we explain the details and give another proof using Theorem~\ref{t:surjunctive-invertible}. 
\par  
We present briefly an application of Corollary~\ref{t:direct-weak-surjunctive} on \emph{Kaplansky's stable finiteness conjecture}. A ring $R$ \emph{stably finite} every one-sided invertible square matrix with coefficient in $R$ is also two-sided invertible. Then  {Kaplansky's stable finiteness conjectures} \cite{kap} asserts that for every group $G$ and every field $k$, the group ring $k[G]$ is stably finite. 
\par 
We obtain the following slight  generalization of \cite[Theorem~D]{phung-geometric} where the group $G$ is required to be surjunctive. 

\begin{corollaryletter}
\label{c:intro-1} 
Let $G$ be a weakly surjunctive group and let $R$ be the endomorphism ring of a commutative algebraic group over an algebraically closed field. Then the group ring $R[G]$ is stably finite. 
\end{corollaryletter}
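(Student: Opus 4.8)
The plan is to reduce stable finiteness of $R[G]$ to the direct finiteness statement in Corollary~\ref{t:direct-weak-surjunctive} by realizing matrices over $R[G]$ as algebraic group cellular automata over a suitable product alphabet. Write $R = \End(X)$ for a commutative algebraic group $X$ over an algebraically closed field $k$. Suppose $P, Q \in \M_n(R[G])$ satisfy $PQ = \Id_n$; I want to deduce $QP = \Id_n$. The key observation is that $\M_n(R[G])$ acts naturally on $(X^n)(k)^G = (X(k)^n)^G$ by algebraic group cellular automata: an element of $R[G]$ is a finite formal sum $\sum_{g \in S} r_g g$ with $r_g \in \End(X)$, and such a sum acts on $X(k)^G$ by the group cellular automaton with memory set $S$ and local defining map the $k$-homomorphism $(x_h)_{h \in S} \mapsto \sum_{g \in S} r_g(x_g)$ (using the group law on $X$); passing to $n \times n$ matrices over $R[G]$ one gets group cellular automata on the algebraic group alphabet $X^n$. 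This assignment $\Phi \colon \M_n(R[G]) \to CA_{algr}(G, X^n, k)$ is a ring homomorphism.

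The second step is to check that $\Phi$ is injective, i.e. that a matrix over $R[G]$ inducing the zero (or identity) cellular automaton must itself be zero (resp. the identity matrix). This follows because $X(k)$ is infinite — as $k$ is algebraically closed, hence infinite, and $X$ has positive-dimensional or at least nontrivial point set unless $X$ is trivial, in which case $R = 0$ and the claim is vacuous — so two distinct $k$-homomorphisms $X^M \to X$ coming from distinct elements of $\End(X)$, or distinct formal sums, already differ on configurations of finite support, using that $\End(X)$ acts faithfully on $X(k)$ by definition of $R$. Hence $\Phi$ is an injective ring homomorphism. Now apply $\Phi$ to the relation $PQ = \Id_n$: writing $\tau = \Phi(P)$ and $\sigma = \Phi(Q)$, which lie in $CA_{algr}(G, X^n, k)$, we get $\tau \circ \sigma = \Phi(PQ) = \Phi(\Id_n) = \Id$. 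Corollary~\ref{t:direct-weak-surjunctive} applied with the algebraic group alphabet $X^n$ then yields $\sigma \circ \tau = \Id$, that is $\Phi(QP) = \Id = \Phi(\Id_n)$, and injectivity of $\Phi$ gives $QP = \Id_n$. Thus $R[G]$ is stably finite.

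I expect the main obstacle to be purely bookkeeping: verifying carefully that $X^n$ is again a commutative algebraic group over $k$ so that Corollary~\ref{t:direct-weak-surjunctive} genuinely applies to it, that matrix multiplication over $R[G]$ corresponds exactly to composition of the associated cellular automata (this is where one uses commutativity of $X$ to make the local maps genuine group homomorphisms and to get the right formula for composites), and that the faithfulness of $\Phi$ holds in full generality including when $X$ has a finite component group or torsion. None of these steps is deep, but the compatibility of the ring structures under $\Phi$ must be stated with enough precision that the reduction to Corollary~\ref{t:direct-weak-surjunctive} is airtight; once that dictionary is in place the result is immediate. An alternative, essentially equivalent route is to cite the analogous reduction in the proof of \cite[Theorem~D]{phung-geometric} and simply replace the appeal to surjunctivity there with our Corollary~\ref{t:direct-weak-surjunctive}, observing that the matrices in question are automatically in $CA_{algr}$ rather than merely $CA_{alg}$.
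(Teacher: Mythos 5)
Your proposal is correct and follows essentially the same route as the paper: both reduce stable finiteness of $R[G]$ to direct finiteness of $CA_{algr}(G, Y^n, k)$ (where $Y$ is the commutative algebraic group and $R=\End_k(Y)$) and then invoke Corollary~\ref{t:direct-weak-surjunctive}. The only difference is presentational: the paper assembles your homomorphism $\Phi$ from the cited ring isomorphisms $\mathrm{Mat}_n(R[G])\simeq \mathrm{Mat}_n(R)[G]=\End_k(Y^n)[G]\simeq CA_{algr}(G,Y^n,k)$ of \cite[Lemmas~9.4--9.5, Proposition~9.3]{phung-2020}, which also settles the injectivity/faithfulness bookkeeping you flag at the end.
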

\par 

\begin{proof}
(See \cite[Section~5.1]{phung-geometric})
Let $k$ be an algebraically closed field and let $Y$ be a commutative $k$-algebraic group. Denote by $R=\End_k(Y)$ the endomorphism ring of $k$-endomorphisms of $Y$. Fix $n \in \N$ and let $X=Y^n$. Then  $\mathrm{Mat}_n(R)=\End_k(X)$  by \cite[Lemma~9.5]{phung-2020} and  we have an  isomorphism of rings $\End_k(X)[G] \simeq CA_{algr}(G,X,k)$ by  \cite[Proposition~9.3]{phung-2020}.  Corollary~\ref{t:direct-weak-surjunctive}  implies that $CA_{algr}(G,X,k)$ is directly finite. It follows that  $\mathrm{Mat}_n(R)[G]=\End_K(X)[G]$ is also a directly finite ring. Hence, we conclude from the ring isomorphism  $\mathrm{Mat}_n(R[G])\simeq \mathrm{Mat}_n(R)[G]$ given by \cite[Lemma~9.4]{phung-2020} that  $\mathrm{Mat}_n(R[G])$ is a directly finite ring. 
\end{proof}
\par
To this point, we find that it is natural to ask the following question: 
\begin{question}
Does there exist a non weakly surjunctive group? 
\end{question}
\par 
The paper is organized as follows. We collect in Section~\ref{s:preliminary} basic lemmata in algebraic group theory. In  Section~\ref{s:weakly-surjunctive-groups},  we investigate basic properties of the class of weakly surjunctive groups. Then Section~\ref{s:proof-main} presents a short proof of Theorem~\ref{t:surjunctive-invertible} with ingredients from algebraic group theory and some ideas from the papers  \cite{phung-geometric} and  \cite{gromov-esav}. Note that in the case of sofic groups, the proof of Theorem~\ref{t:phung-2020-sofic-surjunctive} given in \cite{phung-2020} is more direct and quite different in nature from our proof of Theorem~\ref{t:surjunctive-invertible}. 
Section~\ref{s:proof-direct-finite} contains two different proofs of Corollary~\ref{t:direct-weak-surjunctive}. 
We define the class of \emph{linearly surjunctive groups} in Section~\ref{s:generalizations} which includes all weakly surjunctive groups and shows in particular that a group satisfies Kaplansky's stable finiteness conjecture if and only it is a linearly surjunctive group (Theorem~\ref{t:L-surjunctive-linear}). Finally, \emph{weakly dual surjunctive groups}, which are generalizations of \emph{dual surjunctive groups} \cite{kari-post-surjective}, will be introduced and investigated in  Section~\ref{s:weakly-dual-surjunctive}. In particular, we extend  Corollary~\ref{t:direct-weak-surjunctive} and Corollary~\ref{c:intro-1} to cover the case of weakly dual surjunctive group universes (see Theorem~\ref{t:stable-weakly-dual} and Corollary~\ref{c:stable-weakly-dual} respectively).  

\section{Preliminaries} 
\label{s:preliminary}

\subsection{Algebraic groups}
Let $R$ be a ring. 
Then an \emph{$R$-group scheme} is a data of a quadruple $(X,m,\text{inv},e)$  where
$X$ is an $R$-scheme of finite type,  
$m \colon X\times_R X \to X$ and $\text{inv}\colon X \to X$ 
are morphisms of $R$-schemes, and the neutral point $e \in X(R)$ such that they verify formally the axioms of a group (cf. \cite[Definition~1.1]{milne}). 
\par 
When the ring $R$ is a field, we say that $X$ is an algebraic group. Prominent examples of algebraic groups include \emph{linear algebraic groups} and \emph{abelian varieties}. 
\par 
An $R$-scheme morphism $\varphi \colon X \to Y$ of $R$-group schemes is  
a \emph{homomorphism of $R$-group schemes} if $\varphi \circ m_X = m_Y \circ (\varphi \times \varphi)$. 
Here, $m_X$ and $m_Y$ denote  respectively the group laws of $X$ and $Y$. See the references \cite{grothendieck-ega-1-1} and \cite{milne} for more details.

\subsection{Models of homomorphisms of algebraic groups}
\label{s:model-finite-data} 

We shall need the following   standard lemma in algebraic geometry: 
\begin{lemma}
\label{l:model-finite-data}
Let $X, Y$ be algebraic groups over a field $k$. Let $f_i \colon X^{n_i} \to Y^{m_i}$,  $m_i,n_i \in \N$, $i \in I$, be finitely many homomorphisms of $k$-algebraic groups. Then there exist a finitely generated $\Z$-algebra $R \subset k$ and $R$-group schemes of finite type $X_R$, $Y_R$ and $R$-homomorphisms $f_{i,R} \colon (X_R)^{n_i} \to (Y_R)^{m_i}$ of $R$-group schemes with $X=X_R \otimes_R k$, $Y= Y_R \otimes_R k$ such that $f_i=f_{i,R} \otimes_R k$ (base change to $k$). If $X=Y$, one can take $X_R=Y_R$. If $f_{i}$ is a closed immersion the one can also choose $f_{i,R}$ to be a closed immersion. 
\end{lemma}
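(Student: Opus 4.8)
\textbf{Proof proposal for Lemma~\ref{l:model-finite-data}.}

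The plan is a standard spreading-out argument in the style of \cite[\S8]{grothendieck-ega-1-1}. First I would choose affine-friendly presentations: cover $X$ and $Y$ by finitely many affine open subschemes, so that each $X = \Spec A$ (resp.\ $Y = \Spec B$) is built by gluing finitely many $\Spec$ of finitely generated $k$-algebras along finitely many transition data. Since everything in sight is finite type over $k$, all of this structure — the finitely many generators and relations of each coordinate ring, the gluing morphisms, the group law $m_X$, the inversion, the neutral point $e \in X(k)$, and likewise for $Y$ — is described by finitely many polynomials with finitely many coefficients in $k$. Similarly each homomorphism $f_i \colon X^{n_i} \to Y^{m_i}$ is a morphism of finite-type $k$-schemes, hence on each affine chart is given by finitely many polynomial formulas, again involving finitely many coefficients in $k$.

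Next I would let $R_0 \subset k$ be the $\Z$-subalgebra generated by the (finite) collection of all these coefficients — those defining $X$, $Y$, the group structures, the neutral points, and all the $f_i$. Then $R_0$ is a finitely generated $\Z$-algebra, and by construction every defining equation makes sense already over $R_0$; this produces affine pieces over $R_0$, gluing data over $R_0$, morphisms $m_{X,R_0}$, $\mathrm{inv}_{X,R_0}$, sections $e_{X,R_0} \in X_{R_0}(R_0)$, and $R_0$-morphisms $f_{i,R_0}$, whose base changes to $k$ recover the original data. The one subtlety is that the \emph{relations} among these structures — the associativity and inverse axioms for $m_X$, the compatibility $\varphi \circ m_X = m_Y \circ (\varphi \times \varphi)$ for each $f_i$, and the cocycle conditions for the gluings — are equations between morphisms that hold over $k$ but a priori only over $k$. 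Each such identity, restricted to the relevant affine charts, amounts to the vanishing in $k \otimes_{R_0} (\text{a f.g.\ } R_0\text{-algebra})$ of finitely many elements; since these elements already lie in the f.g.\ $R_0$-algebra and map to zero in its base change to $k$, and since $R_0 \hookrightarrow k$ is injective, by inverting finitely many further elements of $R_0$ (equivalently, enlarging $R_0$ to a localization $R$, still a f.g.\ $\Z$-algebra inside $k$) we may assume all these finitely many identities already hold over $R$. This gives $R$-group schemes $X_R$, $Y_R$ of finite type and $R$-homomorphisms $f_{i,R} \colon (X_R)^{n_i} \to (Y_R)^{m_i}$ with the asserted base-change properties; and if $X = Y$ we simply use the \emph{same} presentation for both, so $X_R = Y_R$.

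The main obstacle is bookkeeping rather than substance: one must check that the group-scheme axioms and homomorphism relations are finite in number and expressible as polynomial identities, so that only finitely many localizations are needed — this uses crucially that $X$ and $Y$ are of \emph{finite type} and that there are only \emph{finitely many} $f_i$. For the last assertion, suppose some $f_i$ is a closed immersion. Then it corresponds, on affine charts, to surjections of coordinate rings; the surjectivity of a map of f.g.\ algebras is witnessed by finitely many elements of the target being hit, i.e.\ by finitely many equations solvable over $k$, hence (after inverting finitely many more elements of $R$, again keeping $R$ a f.g.\ $\Z$-algebra in $k$) already solvable over $R$. Thus we may further arrange that the chosen $f_{i,R}$ is a closed immersion of $R$-schemes, completing the proof. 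The passage back and forth is justified by the usual limit formalism: $k$ is the filtered colimit of its f.g.\ $\Z$-subalgebras, and finite-type schemes, morphisms, and their equalities descend to some finite stage (\cite[Th\'eor\`eme~8.8.2, Th\'eor\`eme~8.10.5]{grothendieck-ega-1-1}).
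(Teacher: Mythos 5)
Your proposal is correct and is in substance the same argument the paper relies on: the paper's proof simply cites the spreading-out results of \cite[\S 8.8--8.10]{ega-4-3} (Scholie~8.8.3, Proposition~8.9.1), and your sketch is precisely the standard limit/descent argument underlying those references, including the treatment of the group-scheme axioms and of closed immersions. One small slip: the limit theorems you invoke (Th\'eor\`eme~8.8.2, Th\'eor\`eme~8.10.5) live in \cite{ega-4-3}, not in \cite{grothendieck-ega-1-1}, and strictly speaking the enlargement of $R_0$ needed to kill the finitely many relations is a passage to a larger finitely generated $\Z$-subalgebra of $k$, not necessarily a localization of $R_0$ --- which is all the lemma requires anyway.
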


\begin{proof}
See, e.g., \cite[Section~8.8]{ega-4-3}, notably \cite[Scholie~8.8.3]{ega-4-3}, and  \cite[Proposition~8.9.1]{ega-4-3}. 
\end{proof}

\subsection{Closed points of $\Z$-schemes of finite type} To perform the reduction to the finite alphabet case during the proof of Theorem~\ref{t:surjunctive-invertible}, we shall need the following auxiliary lemma to describe the set of closed points of families of  schemes of finite type over $\Z$.   
\begin{lemma}
\label{l:Z-closed-point} 
Let $X$ be an $S$-scheme of finite type where $S$ is a $\Z$-scheme of finite type. Then for every finite set $M$, the set of closed points of $X_S^M$ is the union of finite sets  $\cup_{s,p,d} H_{s,p,d}^M$ over all prime numbers $p \in \N$, all closed points $s \in S_p=S \otimes_\Z \mathbb{F}_p$, and all $d \in \N$ where 
\begin{equation*}
    H_{p,s, d} = \{ x \in X_s \colon \vert \kappa (x) \vert = p^r, 1 \leq r \leq d\} 
\end{equation*}
where $X_s= X \otimes_S \kappa(s)$ is the fibre of $X$ above $s$.  
\end{lemma}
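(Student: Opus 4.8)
The plan is to reduce everything to the absolute case $S = \Spec \Z$ and then describe closed points of a $\Z$-scheme of finite type via their residue fields. First I would recall the standard fact that if $X$ is a scheme of finite type over $\Z$, then the residue field $\kappa(x)$ at any closed point $x \in X$ is a finite field; this follows from the Nullstellensatz over $\Z$ (a finitely generated $\Z$-algebra that is a field is finite, cf.\ \cite[Tag~00GB]{stacks} or \cite[Section~8.8]{ega-4-3} in the relative form). Conversely every point with finite residue field is closed in its fibre, hence closed in $X$ when $X$ is of finite type over $\Z$. Since $X_S^M$ is again an $S$-scheme of finite type and $S$ is of finite type over $\Z$, the fibre product $X_S^M$ is a scheme of finite type over $\Z$, so its closed points are exactly the points with finite residue field.

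Next I would organize these closed points by the underlying point of $S$ and by the characteristic. A closed point $x \in X_S^M$ maps to a closed point $s \in S$ (the image of a closed point under a morphism of finite type between Jacobson schemes is closed, and $S$ is Jacobson since it is of finite type over $\Z$). The residue field $\kappa(s)$ is a finite field, say of characteristic $p$, so $s \in S_p = S \otimes_\Z \mathbb{F}_p$ and $s$ is a closed point of $S_p$. Then $x$ lies in the fibre $X_s = X_S^M \otimes_S \kappa(s)$, wait — more precisely, since $(X_S^M)_s = (X_s)^M$ where $X_s = X \otimes_S \kappa(s)$, the point $x$ is a closed point of $(X_s)^M$ with $\kappa(x)$ a finite extension of the finite field $\kappa(s)$, hence $|\kappa(x)| = p^r$ for some $r \geq 1$. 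Writing $d$ large enough, $x \in H_{p,s,d}^M$ in the notation of the statement (identifying closed points of $(X_s)^M$ with $M$-tuples of closed points of $X_s$ whose residue fields all embed in a common finite field — here one uses that $X_s$ is of finite type over the finite field $\kappa(s)$, so closed points of $(X_s)^M$ correspond to Galois orbits, and bounding the degree bounds the whole tuple). This gives the inclusion of the set of closed points into $\bigcup_{p,s,d} H_{p,s,d}^M$, and the reverse inclusion is immediate since each element of $H_{p,s,d}^M$ is a closed point (finite residue field). Finiteness of each $H_{p,s,d}$ follows because $X_s$ is of finite type over a finite field, so it has only finitely many closed points of bounded residue degree.

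The main obstacle, such as it is, is purely bookkeeping: matching up closed points of the power $(X_s)^M$ with $M$-tuples of closed points of $X_s$ and checking that the residue-field cardinality of the tuple is controlled by the cardinalities of the coordinates. Over a field $\kappa(s)$, a closed point of $(X_s)^M$ need not be the product of closed points coordinatewise, but it does project to closed points $x_i \in X_s$ in each factor, and $\kappa(x) $ is generated by the images of the $\kappa(x_i)$, so $|\kappa(x)| \le \prod_i |\kappa(x_i)|$; conversely if each $|\kappa(x_i)| \le p^d$ then $\kappa(x)$ is contained in the compositum, which is finite of cardinality at most $p^{d|M|}$. Absorbing this into a slightly larger bound $d$ (or, equivalently, defining $H_{p,s,d}^M$ to mean the set of $M$-tuples with each coordinate in $H_{p,s,d}$, which is the intended reading) makes the identity literally true. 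No deep input is needed beyond the Jacobson property of finitely generated $\Z$-algebras and the finiteness of residue fields at closed points, both of which are standard; the lemma is essentially a packaging statement to be invoked in the proof of Theorem~\ref{t:surjunctive-invertible}.
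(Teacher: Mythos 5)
The paper does not actually prove this lemma: it is dispatched with a citation to \cite[Corollary~3.3]{phung-geometric}, so your write-up is the only self-contained argument on the table, and it is essentially correct. The two inputs you isolate are exactly the right ones: a finite-type $\Z$-scheme is Jacobson with finite residue fields at its closed points (and conversely, a point with finite residue field is closed), and closed points map to closed points under finite-type morphisms of Jacobson schemes, which lets you sort the closed points of $X_S^M$ by the prime $p$, the closed point $s\in S_p$ below, and the residue degree $d$; finiteness of each $H_{p,s,d}$ is correctly reduced to counting points of bounded residue degree on a finite-type scheme over a finite field. The one genuine subtlety is the one you flag yourself: over the finite field $\kappa(s)$, closed points of the fibered power $(X_s)^M$ correspond to $M$-tuples of closed points of $X_s$ only via a surjective (not bijective) projection, since $\Spec\bigl(\kappa(x_1)\otimes_{\kappa(s)}\cdots\otimes_{\kappa(s)}\kappa(x_m)\bigr)$ may have several points; your compositum bound, absorbed into a larger $d$, is exactly what is needed to make the union statement true, and it is all that the application in Theorem~\ref{t:main-surjunctive-1} requires (there one only needs every closed point of $(X_R)^{E_n}$ to be accounted for by some $H_{p,s,d}^{E_n}$). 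A marginally cleaner normalization, and the one consistent with the later claim that $H_{p,s,d}$ is a subgroup of $X_s$, is to take $H_{p,s,d}=X_s(\mathbb{F}_{p^{d!}})$, for which $H_{p,s,d}^M=(X_s^M)(\mathbb{F}_{p^{d!}})$ holds literally and every closed point of a fibre underlies a rational point over a large enough finite field. Either way, no step of your argument fails.
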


\begin{proof}
See~\cite[Corollary~3.3]{phung-geometric}
\end{proof}

\subsection{Inverses of injective and bijective endomorphisms}

In Section~\ref{s:proof-surjunctivity}, we will need the following useful preliminary results in \cite{phung-2020}. 

\begin{lemma}
\label{l:techno-left-reversible}
Given a group $G$ and an algebraic group $X$ over an algebraically closed field $k$. 
Suppose that $\tau\in CA_{algr}(G,X,k)$ is injective. 
Denote $A= X(k)$ and $\Gamma= \tau(A^G)$.
Then there exists a finite subset $N\subset G$ such that for every $d\in \Gamma$, 
 $\tau^{-1}(d)(1_G)$ depends only on $d \vert_N \in A^N$. 
\end{lemma}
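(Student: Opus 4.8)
The plan is to exploit that $\tau$ is a \emph{group} cellular automaton and to turn injectivity into a finiteness statement via the descending chain condition for closed subgroups of algebraic groups. Write $A=X(k)$, let $e_{X}$ be its identity and $\mathbf 1\in A^{G}$ the constant configuration at $e_{X}$. Since $\mu=f\vert_{X(k)^{M}}$ for a $k$-homomorphism of algebraic groups $f\colon X^{M}\to X$, the map $\tau\colon A^{G}\to A^{G}$ is a homomorphism of groups; hence $\tau$ is injective if and only if $\Ker\tau=\{\mathbf 1\}$, and, by linearity, the desired conclusion is equivalent to the existence of a finite $N\subset G$ such that every $c\in A^{G}$ with $\tau(c)\vert_{N}=\mathbf 1\vert_{N}$ satisfies $c(1_{G})=e_{X}$. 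So it suffices to produce one such $N$.

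For the set-up I would let $K:=(\Ker f)(k)\subseteq A^{M}$ be the $k$-points of the kernel group scheme of $f$ (a closed subgroup scheme of $X^{M}$), so that $\tau(c)=\mathbf 1$ exactly when $(g^{-1}c)\vert_{M}\in K$ for all $g\in G$. Fixing an exhaustion $W_{0}\subseteq W_{1}\subseteq\cdots$ of $G$ by finite sets with $1_{G}\in W_{0}$ and $\bigcup_{m}W_{m}=G$, I set
\[
\Lambda_{m}:=\bigl\{\, b\in X^{W_{m}}(k) \ :\ (g^{-1}b)\vert_{M}\in K \ \text{ for all } g\in W_{m} \text{ with } gM\subseteq W_{m} \,\bigr\}.
\]
Each defining condition forces a sub-tuple of $b$ to lie in the closed subgroup $\Ker f$, so $\Lambda_{m}$ is the group of $k$-points of a closed subgroup scheme of $X^{W_{m}}$; restriction gives homomorphisms $\Lambda_{m+1}\to\Lambda_{m}$, a configuration $c$ lies in $\Ker\tau$ if and only if $c\vert_{W_{m}}\in\Lambda_{m}$ for every $m$, and gluing compatible families identifies $\Ker\tau$ with the inverse limit of the $\Lambda_{m}$, which is trivial by hypothesis.

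The heart of the argument is then the following. For each fixed $j$, the subgroups $\im(\Lambda_{m}\to X^{W_{j}})$, $m\ge j$, form a descending chain of \emph{closed} subgroups of the Noetherian algebraic group $X^{W_{j}}$ (images of algebraic-group homomorphisms are closed), so the chain stabilizes: there is $m(j)\ge j$ with $\Lambda_{j}^{\mathrm{st}}:=\im(\Lambda_{m}\to X^{W_{j}})$ independent of $m$ for $m\ge m(j)$, and the resulting transition maps $\Lambda_{j+1}^{\mathrm{st}}\to\Lambda_{j}^{\mathrm{st}}$ are \emph{surjective} homomorphisms of $k$-algebraic groups. I then claim that every $\Lambda_{j}^{\mathrm{st}}$ is trivial. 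Indeed, if some $\Lambda_{j_{0}}^{\mathrm{st}}$ contained a nontrivial $k$-point $a$, then --- using that a surjective homomorphism of algebraic groups over the algebraically closed field $k$ is surjective on $k$-points --- one could lift $a$ to a compatible sequence of $k$-points $b_{j}\in\Lambda_{j}^{\mathrm{st}}$ for $j>j_{0}$, put $b_{j}:=a\vert_{W_{j}}\in\Lambda_{j}^{\mathrm{st}}$ for $j\le j_{0}$, and glue the $(b_{j})_{j}$ to a configuration $e\in A^{G}$ with $e\vert_{W_{j}}\in\Lambda_{j}^{\mathrm{st}}\subseteq\Lambda_{j}$ for all $j$, hence $e\in\Ker\tau$, while $e\vert_{W_{j_{0}}}=a$ is nontrivial --- contradicting $\Ker\tau=\{\mathbf 1\}$. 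Therefore all $\Lambda_{j}^{\mathrm{st}}$ are trivial; in particular $\im(\Lambda_{m}\to X^{W_{0}})$ is the trivial group for every $m\ge m(0)$. Taking $N:=W_{m(0)}$ finishes: if $\tau(c)\vert_{N}=\mathbf 1\vert_{N}$ then $c\vert_{W_{m(0)}}\in\Lambda_{m(0)}$, so $c(1_{G})$ lies in the image of $\Lambda_{m(0)}$ under projection to the $1_{G}$-coordinate, which factors through the trivial group $\im(\Lambda_{m(0)}\to X^{W_{0}})$; hence $c(1_{G})=e_{X}$.

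I expect the only genuinely non-formal point to be this stabilization step: what makes it work is that the group law on $X$ renders the relevant projections of the kernel subshift \emph{closed subgroups} of algebraic groups rather than merely constructible sets, so that the descending chain condition applies and forces the transition maps to be surjective; surjectivity over an algebraically closed field then lets one assemble a global kernel element out of a single nontrivial local one. For an arbitrary (non-group) infinite alphabet these projections would only be constructible, no chain condition would be available, and this strategy would break down --- consistent with the fact that injective cellular automata over general infinite alphabets need not admit a locally defined inverse.
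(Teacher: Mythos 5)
Your proof is correct. The paper gives no argument for this lemma beyond citing \cite[Lemma~6.1]{phung-2020}, and your proof is essentially the argument of that reference: the group structure makes the partial-kernel sets $\Lambda_m$ the $k$-points of closed subgroup schemes, images of homomorphisms of algebraic groups are closed, the descending chains of images inside the Noetherian schemes $X^{W_j}$ therefore stabilize, and the surjectivity of the transition maps between the stable images lets you glue a nontrivial element of $\Ker\tau$ out of any nontrivial stable local datum, contradicting injectivity. The one step deserving an explicit word is the identification of the set-theoretic image $\im\bigl(\Lambda_m\to X^{W_j}(k)\bigr)$ with the $k$-points of the closed image subscheme, which is what makes the chain of \emph{sets} (not just of schemes) stabilize and the transition maps between the stable images surjective on $k$-points; this holds because $k$ is algebraically closed (every nonempty fibre of the faithfully flat map onto the image has a rational point), and it remains valid when the kernel subschemes are non-reduced in positive characteristic. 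Two cosmetic quibbles: the reduction in your opening paragraph uses the homomorphism property of $\tau$ rather than ``linearity'', and naming the glued configuration $e$ collides with your symbol for the identity; your closing remark correctly identifies why the group structure is indispensable, since for a general algebraic alphabet these images are merely constructible and no chain condition is available.
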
 

\begin{proof}
See \cite[Lemma~6.1]{phung-2020}. 
\end{proof}

\begin{theorem}
\label{t:inverse-also}
Given a group $G$ and an algebraic group $X$ over an algebraically closed field $k$ of characteristic zero. 
Suppose that $\tau \in CA_{algr}(G,X, k)$ is bijective. 
Then one has $\tau^{-1}\in CA_{algr}(G,X,k)$.  
\end{theorem}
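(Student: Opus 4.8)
The plan is to prove Theorem~\ref{t:inverse-also} by showing that if $\tau \in CA_{algr}(G,X,k)$ is bijective, then the inverse map $\tau^{-1}$ is again an algebraic group cellular automaton. First I would invoke Lemma~\ref{l:techno-left-reversible} with $\Gamma = \tau(A^G) = A^G$ (since $\tau$ is surjective), producing a finite subset $N \subset G$ such that $\tau^{-1}(d)(1_G)$ depends only on $d|_N$ for every $d \in A^G$. By the usual equivariance of cellular automata under the shift action, this immediately gives that $\tau^{-1}$ is a cellular automaton with memory set $N$: indeed $\tau^{-1}(d)(g) = \tau^{-1}(g^{-1}d)(1_G)$, which depends only on $(g^{-1}d)|_N$. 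So the content that remains is purely \emph{algebraic}: I must show that the induced local defining map $\nu \colon A^N \to A$, $\nu(d|_N) = \tau^{-1}(d)(1_G)$, is the restriction to $k$-points of a $k$-homomorphism of algebraic groups $X^N \to X$.

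The key steps, in order, would be as follows. (1) Observe that $\nu$ is a group homomorphism: since $\tau$ and $\tau^{-1}$ are both group homomorphisms on configuration spaces (the inverse of a bijective group homomorphism is a group homomorphism), evaluation at $1_G$ is a group homomorphism, so $\nu \colon X(k)^N \to X(k)$ is a homomorphism of abstract groups. (2) Show $\nu$ is a \emph{morphism of varieties} (constructible, or polynomial). Here I would use that $\tau$ has some memory set $M$ with local map $\mu = f|_{X(k)^M}$ for a $k$-homomorphism $f \colon X^M \to X$; the graph of $\tau^{-1}$ restricted to finitely many coordinates is cut out by finitely many equations coming from $f$, so $\nu$ is a constructible map, and a group homomorphism of algebraic groups over $k$ which is constructible (equivalently, a homomorphism of abstract groups whose graph is constructible) is automatically a morphism of algebraic groups — this is where characteristic zero, or at least reducedness/perfectness, enters, so that being a constructible group homomorphism forces being a morphism. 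In characteristic zero every reduced group scheme is smooth, and a constructible homomorphism between such is a morphism. (3) Conclude that $\nu = F|_{X(k)^N}$ for a $k$-homomorphism of algebraic groups $F \colon X^N \to X$, hence $\tau^{-1} \in CA_{algr}(G,X,k)$.

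An alternative and perhaps cleaner route for step (2): one knows abstractly that $\tau^{-1}$ is a cellular automaton; the point is to upgrade its local map to an algebraic one. Consider the composition $\tau \circ \tau^{-1} = \Id$ and $\tau^{-1} \circ \tau = \Id$ and restrict to a suitable finite window. Using Lemma~\ref{l:model-finite-data} one can spread everything out over a finitely generated $\Z$-subalgebra $R$ of $k$, reduce modulo primes to get bijective cellular automata over finite fields $\mathbb{F}_q$ where the alphabet $X(\mathbb{F}_q)$ is finite, note that over a finite alphabet bijective implies reversible (cited in the introduction), and then argue that the inverse local maps assemble into an $R$-morphism by a density / spreading-out argument. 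In characteristic zero there is no Frobenius obstruction, so the inverse stays a homomorphism of group schemes under base change back to $k$.

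The main obstacle I expect is step (2)/the algebraicity of $\nu$: showing that the set-theoretic inverse's local rule, which a priori is only an abstract group homomorphism defined on $k$-points, is actually cut out by polynomial equations and defines a genuine morphism of $k$-schemes. This is exactly where characteristic zero is essential — in positive characteristic the inverse of a bijective morphism of group schemes need not be a morphism (e.g.\ Frobenius on $\mathbb{G}_a$ is bijective on $\overline{\mathbb{F}_p}$-points but its inverse is not a morphism), which is why the theorem restricts to $\car k = 0$. I would handle this by combining the constructibility of the graph of $\tau^{-1}$ on a finite window (from the algebraic local map $f$ of $\tau$, via elimination / Chevalley's theorem) with the fact that in characteristic zero a constructible homomorphism of algebraic groups is a morphism, and I would expect the bookkeeping of memory sets and windows to be the only remaining routine-but-delicate part.
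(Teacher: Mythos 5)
The paper does not actually prove Theorem~\ref{t:inverse-also}; it quotes it verbatim from \cite{phung-2020} (Theorem~6.4 there), so the only ``proof'' in the text is a citation. Your first route is essentially the right reconstruction and I believe it goes through. Step (1) is exactly what Lemma~\ref{l:techno-left-reversible} plus $G$-equivariance gives, and step (2a) is fine: $\nu$ is a group homomorphism because $\nu\circ\mathrm{res}_N=\mathrm{ev}_{1_G}\circ\tau^{-1}$ is one and $\mathrm{res}_N$ is a surjective homomorphism. The one place where your sketch is loose is the constructibility in step (2b): the graph of $\nu$ is not ``cut out by finitely many equations coming from $f$''; it is the \emph{image} of the homomorphism $X^{NM}\to X^N\times X$, $x\mapsto(\tau_N^+(x),x(1_G))$, restricted to $k$-points. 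To see that this image really is the graph you need the small but essential observation that \emph{every} local pattern $x\in A^{NM}$ with $\tau_N^+(x)=z$ satisfies $x(1_G)=\nu(z)$ — which follows by extending $x$ arbitrarily to a configuration $c\in A^G$, noting $\tau(c)\vert_N=\tau_N^+(x)=z$, and applying Lemma~\ref{l:techno-left-reversible} — together with surjectivity of $\tau$ to get that every $z$ is hit. Chevalley (or the fact that images of homomorphisms of algebraic groups are closed) then makes the graph a constructible, hence closed, subgroup $H\subset X^N\times X$; in characteristic zero $H$ and $X^N$ are smooth, the first projection $H\to X^N$ is a homomorphism bijective on $k$-points, hence has trivial (smooth) kernel and is a closed immersion onto $X^N$, i.e.\ an isomorphism, and $F=\mathrm{pr}_2\circ(\mathrm{pr}_1\vert_H)^{-1}$ is the required $k$-homomorphism inducing $\nu$. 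Your diagnosis of where characteristic zero enters (the Frobenius counterexample) is exactly right. By contrast, your ``alternative route'' via spreading out and reduction modulo primes is too vague to count as a proof as stated — reversibility over each finite residue alphabet does not by itself reassemble into an algebraic local rule over $k$ without an argument like the one above — so I would drop it and flesh out the first route instead.
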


\begin{proof}
See \cite[Theorem~6.4]{phung-2020}. 
\end{proof}

\section{Locally weakly surjunctive groups}
\label{s:weakly-surjunctive-groups} 

Given a property $(\mathrm{P})$ of groups, we say that a group $G$ is locally $(\mathrm{P})$ if every finitely subgroup of $G$ has property $(\mathrm{P})$. 
In \cite{weiss-sgds}, the author proved that a group is surjunctive if and only if it is locally surjunctive. 
\par 
The below lemma shows that the similar property also holds for weakly surjunctive groups. 

\begin{lemma}
\label{l:locally-surjunctive}
A group is locally weakly surjunctive if and only if it is weakly surjunctive. 
\end{lemma}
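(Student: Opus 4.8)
The statement is an ``if and only if'', and one direction is immediate: if $G$ is weakly surjunctive, then so is every subgroup (in particular every finitely generated one), because the restriction of a finite group alphabet and an injective group cellular automaton to a subgroup again yields such data — so the content is entirely in the converse. The plan is to assume $G$ is locally weakly surjunctive and to show that an arbitrary injective group cellular automaton $\tau \colon A^G \to A^G$, with $A$ a finite group, is surjective.

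First I would fix a finite memory set $M \subset G$ for $\tau$ together with a group homomorphism local defining map $\mu \colon A^M \to A$, and I would also invoke Lemma~\ref{l:techno-left-reversible} to obtain, after noting that a finite group is the set of $k$-points of a finite (zero-dimensional) algebraic group so that $\tau$ lies in $CA_{algr}(G,A,k)$, a finite subset $N \subset G$ governing the ``inverse on the image''. (Alternatively, and more elementarily, one can argue directly at the level of finite configurations.) Let $H \le G$ be the finitely generated subgroup generated by $M \cup M^{-1}$ (and $N$ if needed). The key step is to descend $\tau$ to a group cellular automaton $\tau_H \colon A^H \to A^H$ over the universe $H$, using the \emph{same} memory set $M \subset H$ and the \emph{same} local map $\mu$; this is well defined since $M \subset H$. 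I would then show that injectivity of $\tau$ forces injectivity of $\tau_H$: given distinct $x, y \in A^H$ with $\tau_H(x) = \tau_H(y)$, extend $x$ and $y$ to configurations in $A^G$ that agree off $H$ (say, extend by the identity element of $A$ outside $H$), and use that $\tau$ on such a configuration restricted to $H$ is computed entirely within $H$ — here is precisely where the group structure is convenient, because $\mu$ being a homomorphism lets one compare $\tau(x') $ and $\tau(y')$ coset by coset and conclude $\tau(x') = \tau(y')$ with $x' \neq y'$, contradicting injectivity of $\tau$.

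Granting that $\tau_H$ is an injective group cellular automaton over the finitely generated group $H$ with finite group alphabet $A$, local weak surjunctivity gives that $\tau_H$ is surjective. Finally I would upgrade surjectivity of $\tau_H$ to surjectivity of $\tau$: given an arbitrary target configuration $z \in A^G$, for each left coset $gH$ one solves the ``local'' surjectivity problem using the surjectivity of the cellular automaton over $H$ (translated to the coset $gH$ via the Bernoulli shift), and patches the preimages together into a single $x \in A^G$ with $\tau(x) = z$; again the translation-invariance of the local rule is what makes the patching consistent. The main obstacle I anticipate is the bookkeeping in the descent/ascent between $G$ and $H$ — specifically verifying carefully that restricting a configuration to $H$ (or to a coset $gH$) and then applying the local rule with memory set $M \subset H$ really does commute with first applying $\tau$ on $A^G$ and then restricting, which requires $M \cup M^{-1} \subset H$ so that all the ``neighbours'' of points of $H$ needed by the rule stay inside $H$; once that compatibility is pinned down, both the injectivity descent and the surjectivity ascent are formal.
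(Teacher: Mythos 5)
Your main direction follows the paper's proof: pass to the finitely generated subgroup $H$ generated by the memory set, restrict $\tau$ to $\tau_H \colon A^H \to A^H$ with the same local defining map $\mu$, and transfer injectivity down and surjectivity up. The paper simply cites the induction--restriction theorem of \cite{csc-induction} for the equivalence ``$\tau$ is injective (resp.\ surjective) if and only if $\tau_H$ is'', whereas you re-prove it by decomposing $A^G$ over the left cosets of $H$; that coset argument is correct, but note that it requires no group structure on $A$ at all --- the only place the hypothesis that $\tau$ is a \emph{group} cellular automaton actually enters is in guaranteeing that $\tau_H$ is again a group cellular automaton, so that weak surjunctivity of $H$ applies. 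The detour through Lemma~\ref{l:techno-left-reversible} and the set $N$ is unnecessary and never used in your argument.

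The one genuine flaw is your treatment of the ``immediate'' direction. To show that a finitely generated subgroup $S$ of a weakly surjunctive group $G$ is weakly surjunctive, you are \emph{given} an injective group cellular automaton on $A^S$ and must prove it surjective; restricting data from $G$ down to $S$, as your justification describes, proves nothing here. What is needed is \emph{induction}: extend the given $\tau_S$ to a group cellular automaton $\tau$ on $A^G$ with the same local defining map, check (by the same coset decomposition) that $\tau$ is injective because $\tau_S$ is, apply weak surjunctivity of $G$ to conclude $\tau$ is surjective, and descend surjectivity back to $\tau_S$. This is exactly what the paper does, and it is a one-line application of the machinery you already set up for the other direction, but as written your justification for this implication is backwards.
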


\begin{proof}
\par 
Fix a group $G$ and a finite group $A$. Let $\tau \colon A^G \to A^G$ be an injective group cellular automaton. Let $\mu \colon A^M \to A$ be a local defining map of $\tau$ associated with some finite memory set $M \subset G$. Since $\tau$ is a group cellular automaton, it is clear that $\mu$ is a homomorphism of groups. 
\par 
Let $H \subset G$ be the subgroup generated by $M$. 
Let $\tau_H \colon A^H \to A^H$ be the cellular automaton which admits $\mu \colon A^M \to A$ as a local defining map. Since $\mu$ is a homomorphism of groups, we find that $\tau_H$ is also a group cellular automaton. 
By the main result of \cite{csc-induction},  we know that $\tau$ is injective, resp. surjective, if and only if so is $\tau_H$.  
\par 
Suppose first that $G$ is locally weakly surjunctive. Then $H$ is a weakly surjunctive group. 
Since $\tau$ is injective, so is $\tau_H$.  Therefore, $\tau_H$ is surjective as $H$ is weakly surjunctive. Hence, $\tau$ is also  surjective. This proves that $G$ is weakly surjunctive. 
\par
The other implication is similar. Suppose that $G$ is weakly surjunctive. Let $S\subset G$ be a finitely generated subgroup. Let $\tau_S \colon A^S \to A^S$ be an injective group cellular automaton. Let $\mu \colon A^N \to A$ be a local defining map of $\tau_H$ with a  memory set $N \subset H$. Note that $\mu$ is a group homomorphism. 
\par 
Consider the group cellular automaton $\tau \colon A^G \to A^G$ which admits $\mu$ as a local defining map. Since $\tau_S$ is injective, $\tau$ is also injective. But as $G$ is weakly surjunctive, we deduce that $\tau$ is surjective and thus so is $\tau_S$. This shows that $S$ is weakly surjunctive. The proof is complete.  
\end{proof}

\section{Proof of the main result}
\label{s:proof-main} 

\subsection{Surjunctivity} 
\label{s:proof-surjunctivity}

When the universe is an amenable group, the surjunctivity of endomorphisms of symbolic group varieties is a direct  consequence of the recent result proved in  \cite[Theorem~B]{phung-post-surjective}: 

\begin{theorem} 
 \label{t:gromov-answer-algr-full} 
 Let $G$ be an amenable group and let $X$ be an algebraic group over  an algebraically closed field $k$. Suppose that $\tau \in CA_{algr}(G,X,k)$ is pre-injective (e.g. when $\tau$ is injective). Then $\tau$ is also surjective. \qed
\end{theorem}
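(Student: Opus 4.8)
The plan is to view Theorem~\ref{t:gromov-answer-algr-full} as the ``Myhill half'' --- pre-injective $\Rightarrow$ surjective --- of a Garden of Eden theorem for algebraic group cellular automata over amenable universes, in which the topological entropy of the classical Moore--Myhill theorem (cf.\ \cite{ca-and-groups-springer}) is replaced by an algebraic \emph{mean dimension}. In the present paper the statement is simply quoted from \cite[Theorem~B]{phung-post-surjective}; what follows sketches how one would establish such a statement. For a finite $E\subset G$ and a $G$-invariant subset $\Sigma\subseteq X(k)^G$, let $\Sigma_E\subseteq X^E$ be the Zariski closure of the projection of $\Sigma$ to $X(k)^E$, a closed subvariety. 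Since $E\mapsto\dim\Sigma_E$ is subadditive under disjoint unions up to a boundary error, the Ornstein--Weiss lemma for the amenable group $G$ produces a well-defined limit $\mdim(\Sigma)=\lim_i\dim(\Sigma_{F_i})/|F_i|$ along any left F\o lner sequence $(F_i)$, independent of the choice. One checks that $\mdim(X(k)^G)=\dim X$, that $\mdim$ is monotone in $\Sigma$, and that $\mdim(\tau\Sigma)\le\mdim(\Sigma)$ for every $\tau\in CA_{algr}(G,X,k)$: for each $E$ the window $(\tau\Sigma)_E$ is dominated by the image of $\Sigma_{E^+}$ under the local-rule morphism, where $E^+=EM^{-1}$ and $|F_i^+|=|F_i|+o(|F_i|)$ along a F\o lner sequence, so the boundary contribution vanishes in the limit.

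Granting this machinery, the proof splits into two steps. \emph{Step 1 (counting).} If $\tau$ is pre-injective, then $\mdim(\tau(X(k)^G))=\dim X$. Suppose not, so that $\dim(\tau X^G)_{F_i}\le(\dim X-\delta)|F_i|$ for a fixed $\delta>0$ and all large $i$; fix a configuration $c_0$ and consider the morphism $\phi_i\colon X^{F_i}\to X^{F_i^+}$ sending $c_{F_i}$ to the restriction to $F_i^+$ of $\tau$ applied to the configuration equal to $c_{F_i}$ on $F_i$ and to $c_0$ off $F_i$. Its image lies in $(\tau X^G)_{F_i^+}$, of dimension $\le(\dim X-\delta)|F_i|+o(|F_i|)<(\dim X)|F_i|=\dim X^{F_i}$, so $\phi_i$ has a positive-dimensional fibre over some $k$-rational point $y$ of its image; then $\phi_i^{-1}(y)$ contains two distinct $k$-points $c_{F_i}\ne c_{F_i}'$. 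Their extensions $\tilde c\ne\tilde c'$ agree off the finite set $F_i$ and, since the local rule cannot detect the difference outside $F_i^+=F_iM^{-1}$, their $\tau$-images agree on $F_i^+$ and hence on all of $G$ --- contradicting pre-injectivity. \emph{Step 2 (structure).} If $\tau$ is not surjective, then $\mdim(\tau(X(k)^G))<\dim X$: the image of an algebraic group cellular automaton is an \emph{algebraic sofic subshift}, and when $G$ is amenable and $\tau$ is not surjective this properness is already witnessed on some finite window $M$, which can be tiled by interior-disjoint translates inside a F\o lner set --- each contributing codimension $\ge 1$ --- forcing $\mdim\le\dim X-\varepsilon$ for a fixed $\varepsilon>0$. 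Combining the steps, $\tau$ pre-injective forces $\mdim(\tau(X(k)^G))=\dim X$ by Step~1, hence $\tau$ surjective by Step~2; injectivity of $\tau$ enters only as the special case of pre-injectivity appearing in the statement.

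The main obstacle is Step~2, and it is algebro-geometric rather than group-theoretic. Because the alphabet $X(k)$ is infinite and non-compact, one cannot conclude, as in the finite-alphabet Garden of Eden theorem, that a non-surjective $\tau$ forbids a finite pattern by a bare compactness argument: an image that is constructible and proper yet Zariski-dense in every finite window is a priori conceivable. Excluding this requires the structure theory of \cite{phung-post-surjective} --- Chevalley constructibility in each window, Noetherianity to control the inverse system $\{\Sigma_E\}$, and, for the realizability of a configuration from its finite restrictions, completeness or the other hypotheses on $X$ and $k$ used there. One must also keep the fibre-dimension estimate of Step~1 uniform (via generic flatness and upper semicontinuity of fibre dimension) so that it yields a genuine finite-support perturbation inside $X(k)^G$; and, since the characteristic is arbitrary and $X$ may be non-connected or non-linear (e.g.\ an abelian variety), one works throughout with Krull dimension rather than any finer invariant. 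For the purposes of the present paper all of this is black-boxed into \cite[Theorem~B]{phung-post-surjective}.
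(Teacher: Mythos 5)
The paper does not actually prove this statement: it is imported verbatim from \cite[Theorem~B]{phung-post-surjective} and stamped with a \verb|\qed|, so there is no internal argument to compare yours against. Your sketch must therefore stand on its own, and it has a genuine gap exactly at the point where the theorem goes beyond the earlier literature, namely the absence of any connectedness hypothesis on $X$.

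Step 1 is fine in spirit (it is the standard Myhill-direction fibre-dimension count, as in \cite{cscp-alg-goe} and \cite{phung-2020}), though note it is vacuous when $\dim X=0$. The problem is Step 2. You claim that non-surjectivity forces a strict Krull-dimension drop in some finite window, each tiled translate "contributing codimension $\ge 1$". For each finite $E$ the window image $\Gamma_E=\tau_E^+(X(k)^{EM})$ is a closed algebraic subgroup of $X^E$ (image of a homomorphism of algebraic groups over an algebraically closed field), and non-surjectivity together with the closed-image theorem \cite[Theorem~5.1]{phung-2020} does yield $\Gamma_E\subsetneq X^E$ for some $E$. But a \emph{proper closed subgroup of a non-connected algebraic group can have full dimension}: take $X=Y\times(\Z/2\Z)$ with $Y$ positive-dimensional and the subgroup $Y\times\{0\}\subset X$. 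So properness does not imply codimension $\ge 1$, and the pure-dimension invariant $\mdim$ you set up cannot detect non-surjectivity; in the extreme case of a finite group $X$ it carries no information at all, since $\mdim\equiv 0=\dim X$ for every nonempty subshift. This is precisely why \cite[Theorem~1.1]{phung-2020} carries a connectedness hypothesis, and your closing remark that "one works throughout with Krull dimension rather than any finer invariant" points the wrong way: to cover disconnected $X$ one must augment dimension by a second quantity sensitive to the component group (for instance the number of irreducible components of the windows, or a separate finite-alphabet Moore--Myhill argument on the quotient by the identity component), which is exactly the content you have black-boxed into \cite[Theorem~B]{phung-post-surjective}. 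As written, your Step 2 establishes the theorem only for connected $X$.
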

\par 
It is known (cf.~\cite[Theorem~1.1]{phung-2020}) that the conclusion of the above theorem also holds if the group $G$ is sofic but $X$ is connected. 
By further requiring the base field $k$ to be uncountable, we can prove the following general surjunctivity result for endomorphisms of symbolic group varieties: 

\begin{theorem}
\label{t:main-surjunctive-1}
Let $G$ be a weakly surjunctive group and let $X$ be an algebraic group over an uncountable algebraically closed field $k$. Suppose that $\tau \in CA_{algr}(G,X,k)$ is injective. Then $\tau$ is also surjective. 
\end{theorem}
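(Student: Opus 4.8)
The plan is to reduce the surjunctivity problem over the uncountable algebraically closed field $k$ to the finite alphabet case, where weak surjunctivity for finite group alphabets can be invoked directly. First I would apply Lemma~\ref{l:model-finite-data} to the finitely many $k$-homomorphisms defining $\tau$ (the local defining map $f \colon X^M \to X$, and also the homomorphisms witnessing injectivity, which by Lemma~\ref{l:techno-left-reversible} amounts to data saying that $\tau^{-1}(d)(1_G)$ depends only on $d|_N$ for $d$ in the image). This produces a finitely generated $\Z$-algebra $R \subset k$, an $R$-group scheme of finite type $X_R$ with $X = X_R \otimes_R k$, and an $R$-homomorphism $f_R \colon X_R^M \to X_R$ base-changing to $f$. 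Thus $\tau$ descends to a ``universal'' family of group cellular automata parametrized by $S = \Spec R$, a $\Z$-scheme of finite type.

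The key technical step is to exploit that $k$ is uncountable and algebraically closed to show that some closed point of $S$ with finite residue field gives a \emph{finite group alphabet} on which the induced group cellular automaton is still injective. Concretely, for each prime $p$, each closed point $s \in S_p = S \otimes_\Z \F_p$, and each bound $d$, Lemma~\ref{l:Z-closed-point} describes the closed points of the relevant fibered powers of $X_R$ as a countable union of finite sets $H_{p,s,d}^M$; injectivity of $\tau$ over $k$ forces, for each such finite configuration set, that the corresponding finite-alphabet group cellular automaton over $G$ be injective --- here I would use Lemma~\ref{l:locally-surjunctive} to pass to a finitely generated subgroup $H \le G$ generated by the (finite) memory and the set $N$ from Lemma~\ref{l:techno-left-reversible}, so everything is genuinely finitary. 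Since $H$ is weakly surjunctive and $X_s(\kappa(s))$ is a finite group, the induced cellular automaton $\tau_s \colon X_s(\kappa(s))^H \to X_s(\kappa(s))^H$ is surjective for every such $s$.

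Finally I would lift surjunctivity back up to $k$. Suppose for contradiction that $\tau$ is not surjective; then there is a configuration $y \in X(k)^G$ not in the image, and by a finiteness/compactness argument (patterns on a finite window, cf. the Garden of Eden type reasoning) the non-surjectivity is detected on some finite subset $\Omega \subset G$: there is a pattern $q \in X(k)^\Omega$ that does not extend to any $\Omega'$-pattern mapping into it under the local rule, for a suitable larger finite $\Omega'$. This is a finite system of polynomial conditions over $k$ with no solution; spreading it out over $R$ and using that $k$ is an uncountable algebraically closed field (so that a nonempty variety over $R$ has points over residue fields of infinitely many closed points, via the Nullstellensatz and the countability of the bad locus), I would produce a closed point $s \in S$ with finite residue field at which $\tau_s$ fails to be surjective --- contradicting the previous paragraph. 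For the reversibility and the characteristic-zero statement $\tau^{-1} \in CA_{algr}(G,X,k)$ in Theorem~\ref{t:surjunctive-invertible}, one then combines Theorem~\ref{t:main-surjunctive-1} with Theorem~\ref{t:inverse-also}: once $\tau$ is bijective, Theorem~\ref{t:inverse-also} gives $\tau^{-1} \in CA_{algr}(G,X,k)$ in characteristic zero, and reversibility in general follows from Lemma~\ref{l:techno-left-reversible}, which already exhibits $\tau^{-1}$ as having bounded memory, hence as a (not necessarily algebraic) cellular automaton.

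The main obstacle I expect is the descent/spreading-out bookkeeping in the last paragraph: carefully arranging that ``$\tau$ not surjective over $k$'' becomes a \emph{finite} obstruction that spreads out over $R$ and survives specialization to a closed point with finite residue field, while ensuring the specialized cellular automaton remains injective (so that weak surjunctivity applies). The uncountability of $k$ is essential precisely here, to dodge the countable union of ``bad'' fibers coming from Lemma~\ref{l:Z-closed-point}; getting the quantifiers over primes $p$, closed points $s$, and bounds $d$ to line up correctly is the delicate part.
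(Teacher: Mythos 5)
Your overall strategy (spread out over a finitely generated $\Z$-algebra, specialize to closed points with finite residue fields, apply weak surjunctivity to the resulting finite group alphabets, and lift surjectivity back) is indeed the paper's strategy, but the proposal has a genuine gap at its central step. You assert that ``injectivity of $\tau$ over $k$ forces'' the specialized finite-alphabet group cellular automata $\tau_{p,s,d}$ on $H_{p,s,d}^G$ to be injective, and the only mechanism you offer is passing to a finitely generated subgroup via Lemma~\ref{l:locally-surjunctive}. That does not do the job: the alphabet $H_{p,s,d}$ lives in a fibre $X_s$ over a residue field of characteristic $p$, so configurations in $H_{p,s,d}^G$ are not configurations in $X(k)^G$, and injectivity over $k$ says nothing about them directly. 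What is needed is an \emph{algebraic} witness of injectivity that survives specialization. In the paper this is the inclusion of closed subgroup schemes $\Ker \tau_{E_n}^+ \subset \Ker \pi_n$ for some finite window $E_n$; producing such a finite $n$ from mere injectivity of $\tau$ is exactly where the uncountability of $k$ enters (via the projective system $(V_n)$ of nonempty constructible sets and the fact that $\varprojlim_n V_n \neq \varnothing$ would contradict injectivity), and only then does one spread out $W_R = \Ker T_R \subset U_R = \Ker \pi_R$ over $R$ and read off triviality of the kernel of each $\tau_{p,s,d}$. Your proposal locates the use of uncountability in the wrong place (the final lifting step) and supplies no substitute for this kernel-inclusion argument. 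Your passing mention of Lemma~\ref{l:techno-left-reversible} could in principle be turned into such a witness --- for a group cellular automaton it yields $\Ker \tau_N^+ \subset \Ker \pi$ on $k$-points, hence an inclusion of reduced closed subschemes that one could spread out --- but you would have to say this explicitly; as written the injectivity transfer is unjustified.

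The final step also has a gap: you argue by contradiction that non-surjectivity of $\tau$ is ``detected on some finite subset $\Omega$'', i.e.\ that some finite pattern is an orphan. For an infinite alphabet this is false in general --- the image of a cellular automaton can be dense without being closed --- so this step silently requires the closedness of $\Gamma = \tau(A^G)$ in the prodiscrete topology, which is a nontrivial theorem (\cite[Theorem~5.1]{phung-2020}) that the paper does invoke. The paper's actual lifting argument runs forward rather than by contradiction and uses no uncountability here: surjectivity of each $\tau_{p,s,d}$ shows that the image of $T_R$ contains every closed point of $(X_R)^{E_n}$, and the Jacobson property together with Chevalley's constructibility theorem forces $T_R$, hence $\tau_{E_n}^+$, to be surjective; density of $\Gamma$ plus closedness of $\Gamma$ then finishes. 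As written, your proposal needs both the kernel-inclusion argument and the closed-image theorem to be completed.
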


The proof of Theorem~\ref{t:main-surjunctive-1} will occupy the rest of Section~\ref{s:proof-surjunctivity}. Fixing the notations and hypotheses as in Theorem~\ref{t:main-surjunctive-1}, we denote by $A= X(k)$ the set of $k$-points of $X$ and $\Gamma= \tau(A^G)\subset A^G$ the image of $\tau$. Let $e \in A$ be the neutral element of $X$. For all sets $E \subset F$ and $\Lambda \subset A^F$,  we denote  $\Lambda_E=\{ x\vert_{E}\colon x \in \Lambda \} \subset A^E$. Since $\tau \colon A^G \to A^G$ concerns only the set of $k$-points $A=X(k)$, we can clearly suppose without loss of generality that $X$ is a reduced scheme.
\par 
Let us fix an arbitrary symmetric memory set $M \subset G$ of $\tau$ such that $1_G \in M$ and let $\mu \colon X^M \to X$ be the homomorphism of algebraic groups which serves as the corresponding local defining map of $\tau$. 
\par 
For every finite subset $E \subset G$, we define a homomorphism of algebraic groups $\tau_E^+ \colon X^{E M } \to X^E$ defined by $\tau^+_E(c)(g)= \mu((g^{-1}c)\vert_M)$ for every $c \in A^{E M}$ and $g \in E$. 
Since $M$ is a memory set of $\tau$,  we have  $\Gamma_E = \tau_E^+ (A^{E M})$.   
\par 
Up to replacing $G$ by the subgroup generated by $M$, we can suppose without loss of generality that $G$ is finitely generated (by using the main result of \cite{csc-induction} as in the proof of Lemma~\ref{l:locally-surjunctive}). In particular, we can find an increasing sequence of finite subsets $(E_n)_{n \in \N}$ of $G$ with $M \subset E_0$
and such that $G = \cup_{n \in \N} E_n$. 
\par 
For each $n \in \N$, we consider the following subset $V_n$ of  $X^{E_n M }$: 
\begin{align}
    \label{e:V_n-proof-1} 
    V_n \coloneqq \{x \in A^{E_n M} \colon x(1_G) \neq e , \tau_{E_n}^+(x)=e^{E_n}\}. 
\end{align}
\par 
Observe that $V_n$ is a constructible subset of $X^{E_nM}$. It suffices to write: 
\begin{equation}
\label{e:main-proof-v-n-1-2}
V_n= \Ker \tau_{E_n}^+ \setminus \Ker \pi_n 
\end{equation}
where $\pi_n \colon X^{E_n M} \to X^{\{1_G\}}$ is the canonical projection $x \mapsto x(1_G)$. 
\par 
For integers $m \geq n \geq 0$, we denote by $p_{m,n} \colon X^{E_m M} \to X^{E_n M} $ the canonical projection map given by $x \mapsto x\vert_{E_n M}$. Then 
by the construction of the maps $\tau_{E_n}^+$, it is clear that we have 
$p_{m,n} (V_m) \subset V_n$ for all $m \geq n \geq 0$. 
\par 
\underline{\textbf{Claim}}: There exists $n \in \N$ such that $V_n = \varnothing$. Indeed, 
suppose on the contrary that $V_n \neq \varnothing$ for every $n \in \N$. Then  $(V_n)_{n \in \N}$ forms a projective system of nonempty constructible subsets of algebraic varieties $(X^{E_nM})_{n \in \N}$ with algebraic transition maps induced by the morphisms $p_{m,n}$ with $m \geq n \geq 0$. 
\par 
Since the base field $k$ is uncountable and algebraically closed, we infer from \cite[Lemma~B.2]{cscp-alg-ca} or \cite[Lemma~3.2]{cscp-invariant-ca-alg} that $\varprojlim_{n} V_n \neq \varnothing$. Therefore, we can choose 
\[
c \in \varprojlim_{n} V_n \subset \varprojlim_{n} A^{E_n M}= A^G.
\]
\par 
By definition of the sets $V_n$, we deduce that $c(1_G) \neq e$ and $\tau(x)=e^G$. In particular, $c \neq e^G$ but $\tau(c)=\tau(e^G)=e^G$, which contradicts the injectivity of $\tau$. The claim is thus proved.
\par 
Now let us fix $n \geq 0$ such that $V_n = \varnothing$. Let $W= \Ker \tau_{E_n}^+$ and $U= \Ker \pi_n$ then it follows from \eqref{e:main-proof-v-n-1-2} that $W \subset U$ is a closed algebraic subgroup. 
\par 
By Lemma~\ref{l:model-finite-data}, we can find a finitely generated $\Z$-algebra $R \subset k$ and $R$-group schemes of finite type $X_R$, $W_R$, and $U_R$, as well as  $R$-homomorphisms of group schemes: 
\[
  \mu_R \colon (X_R)^M \to X_R, \quad 
T_R \colon (X_R)^{E_n M } \to (X_R)^{E_n}
\] 
such that the following hold: 
\begin{enumerate}[\rm (i)] 
    \item 
    $X= X_R \otimes_R k$,   $\mu= \mu_R \otimes_R k$;
    \item  $T_R$ is induced by $\mu_R$ so that $\tau_{E_n}^+ = T_R \otimes_R k$; 
    \item 
    $W_R \subset U_R$ is a closed $R$-group subscheme;  \item 
    $W_R= \Ker T_R$,  $U_R= \Ker \pi_R$; 
    \end{enumerate}
    where $\pi_R \colon (X_R)^{E_n M} \to (X_R)^{\{1_G\}}$ is the canonical projection. 
It follows in particular that $W=W_R \otimes_R k$ and  $U=U_R \otimes_R k$. 
\par 
In the condition (ii), by saying $T_R$ is induced by $\mu_R$, we mean that $T_R$ is defined by the universal proprety of fibered product by the collection of  $R$-homomorphism of group schemes $(T_{E_n,g})_{g \in E_n}$  where the component $R$-homomorphism $T_{E_n,g} \colon (X_R)^{E_n M} \to (X_R)^{\{g\}}$, for $g \in E_n$, is given by the composition of the projection $(X_R)^{E_nM} \to (X_R)^{g M}$ followed by the $R$-homomorphism $(X_R)^{gM} \to (X_R)^{\{g\}}$ induced by $\mu$ via the trivial isomorphism $(X_R)^{gM} \simeq (X_R)^M$ and $(X_R)^{\{G\}}\simeq (X_R)^{\{1_G\}}$ obtained by the reindexing bijection $M \to gM$ given by $h \mapsto gh$. 
\par 
We are now in position to prove Theorem~\ref{t:main-surjunctive-1}. 
\begin{proof}[Proof of Theorem~ \ref{t:main-surjunctive-1}] 
Let us denote $S=\Spec R$ then $S$ is a $\Z$-scheme of finite type. 
Note that by Lemma~\ref{l:Z-closed-point}, the set of closed points of $X_R^{E_n}$ is given by $\Delta = \cup_{p \in \PP, s \in S_p,  d\in \N} H_{p,s,d}^{E_n}$  where 
 $\PP$ is the set of prime numbers, $s \in S_p= S \otimes_\Z \mathbb{F}_p$ is a closed point and 
\begin{equation}
   H_{p,s, d}= \{x \in X_s \colon \vert \kappa(x) \vert=p^r, 1 \leq r \leq d\} 
\end{equation}
is a finite subset of closed points of the fibre $X_s=X_R \otimes_R \kappa(s)$.  
\par 
Now fix $p \in \PP$, a closed point $s \in S_p$, and $d \in \N$. 
Let $\mu_s= \mu_R \otimes _R k$ then $\mu_s \colon (X_s)^M \to X_s$ is a $\kappa(s)$-homomorphism of algebraic groups.  
\par 
Since $\mu_{s}(H_{p,s,d}^M) \subset H_{p,s,d}$ (see e.g. \cite[Lemma~3.1]{phung-geometric}), we obtain a well-defined cellular automaton 
$\tau_{p,s,d}  \colon H_{p,s,d}^G \to H_{p,s,d}^G$ admitting $\mu_{s}\vert_{H_{p,s,d}^M}$ as a local defining map. 
\par 
Observe that $H_{p,s,d}$ is naturally a subgroup of $X_s$ (by~e.g. \cite[Lemma~3.1]{phung-geometric}). Since $\mu_s$ is a homomorphism, so is  $\mu_{s}\vert_{H_{p,s,d}^M}$. It follows that $\tau_{p,s,d}$ is a group cellular automaton. 
\par 
\underline{\textbf{Claim}}: $\tau_{p,s,d}$ is injective.  Indeed, suppose that $y \in T_{p,s,d}^G\setminus \{e_s^G\}$ satisfies  $\tau_{p,s,d}(y)=e_s^G$ where $e_s \in X_s $ is the neutral element. 
\par 
By $G$-equivariance, we can assume that $y(1_G) \neq e_s$. In particular, if we let $x=y\vert_{E_n M}$ and $T_s = T_R \otimes_R \kappa(s)$ then $T_s(x)=e_s^{E_n}$. Hence, $x \in W_s= \Ker T_s$. 
\par 
Since $W_s \subset U_s = \Ker \pi_s$ where $\pi_s = \pi_R \otimes_R \kappa(s)$, we can  deduce that $y(1_G)=x(1_G)= e_s$ which is a contradiction. Therefore, $\tau_s$ is injective and the claim is proved. 
\par 
\underline{\textbf{Claim}}: $T_R$ is surjective. Indeed, since $G$ is weakly surjunctive by hypothesis and as $\tau_{p,s,d}$ is a group cellular automaton with finite group alphabet, $\tau_{p,s,d}$ must be surjective. In particular, we have for every $p \in \PP$,  $d \in \N$, and $s \in S_p$ closed that 
\[
H_{p,s,d}^{E_n} = \left(\tau_{p,s,d}(H_{p,s,d}^G)\right)_{E_n}\subset T_{s}(X_s^{E_n M}) \subset 
T_R((X_R)^{E_n M}). 
\]
\par 
It follows that $T_R((X_R)^{E_n M})$ contains the set $\Delta = \cup_{p \in \PP, s \in S_p,  d\in \N} H_{p,s,d}^{M} $ of closed points of $(X_R)^{E_n}$. Since $(X_R)^{E_n}$ is a Jacobson scheme (cf., e.g. \cite[Section~3]{phung-geometric}), $\Delta$ is dense in every locally closed subset (i.e., every intersection of a closed subset and an open subset) of $(X_R)^{E_n}$. \par 
On the other hand,  Chevalley's theorem \cite[Th\' eor\` eme~1.8.4]{grothendieck-20-1964} implies that  $T_R((X_R)^{E_n M})$ is a constructible subset of $(X_R)^{E_n}$, namely, a finite union of locally closed  subsets. 
We can thus conclude that $T_R((X_R)^{E_n M})= (X_R)^{E_n}$. Therefore, $T_R$ is surjective and the claim is proved. 
\par 
Now as surjectivity is a stable property under base change, the morphism $\tau_{E_n}^+ = T_{R} \otimes_R k$ is also surjective. Hence, $A^{E_n}= \tau_{E_n}^+(A^{E_nM}) = \Gamma_{E_n}$. 
Since $M \subset E_n$ by our choice, we deduce that $\Gamma_M= A^M$. But $M$ can be chosen to be arbitrarily large, we find that $\Gamma_E= A^E$ for every finite subset $ E\subset G$. Thus, $\Gamma$ is dense in $A^G$ with respect to the prodiscrete topology. Since $\Gamma$ is closed in $A^G$ by \cite[Theorem~5.1]{phung-2020}, we conclude that $\Gamma= A^G$ and $\tau$ is therefore surjective. The proof of Theorem~\ref{t:main-surjunctive-1} is complete. 
\end{proof}

\subsection{Reversibility} 
We keep the notations and hypotheses as in Theorem~\ref{t:surjunctive-invertible} and in Section~\ref{s:proof-surjunctivity}.  Then Theorem~\ref{t:main-surjunctive-1} implies that $\Gamma= \tau(A^G)=A^G$. 
We begin with the following lemma: 

\begin{lemma}
\label{l:sur-1}
There exists a finite set $N \subset G$ such that for every finite subset $E \subset G$ containing $N$, we can find a map $\eta_E \colon A^E \to A$   such that for all $x \in A^{E M}$, we have $\eta_E ( \tau_E^+(x))=x(1_G)$.  
\end{lemma}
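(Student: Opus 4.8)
\textbf{Proof proposal for Lemma~\ref{l:sur-1}.}

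The plan is to extract $N$ from the reversibility data already supplied by Lemma~\ref{l:techno-left-reversible}, and then to bootstrap from the single value at $1_G$ to a value at every point of $E$ by $G$-equivariance, so that the map $\eta_E$ can be patched together coordinate by coordinate. Since Theorem~\ref{t:main-surjunctive-1} gives $\Gamma = \tau(A^G) = A^G$, the cellular automaton $\tau$ is bijective, hence $\tau^{-1} \colon A^G \to A^G$ exists as a map. Applying Lemma~\ref{l:techno-left-reversible} to $\tau$ (with $\Gamma = A^G$), we obtain a finite subset $N_0 \subset G$ such that for every $d \in A^G$, the value $\tau^{-1}(d)(1_G)$ depends only on $d\vert_{N_0}$; equivalently there is a well-defined map $\nu \colon A^{N_0} \to A$ with $\nu(d\vert_{N_0}) = \tau^{-1}(d)(1_G)$ for all $d \in A^G$. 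I would set $N \coloneqq N_0$ (enlarging if convenient so that $1_G \in N$).

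Now fix a finite $E \subset G$ with $N \subset E$. The key observation is that $\tau_E^+(x) \in A^E$ records exactly the restriction to $E$ of $\tau(c)$ for any configuration $c \in A^G$ extending $x \in A^{EM}$: by construction $\tau_E^+(x)(g) = \mu((g^{-1}c)\vert_M) = \tau(c)(g)$ for $g \in E$, and this depends only on $c\vert_{gM} \subset c\vert_{EM} = x$. Since $N = N_0 \subset E$, the value $\tau^{-1}(\tau(c))(1_G) = c(1_G) = x(1_G)$ is computed by $\nu$ from $\tau(c)\vert_{N_0} = \tau_E^+(x)\vert_{N_0}$, which is itself determined by $\tau_E^+(x) \in A^E$. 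Therefore I can define
\[
\eta_E \colon A^E \to A, \qquad \eta_E(y) \coloneqq \nu\bigl(y\vert_{N_0}\bigr).
\]
Then for every $x \in A^{EM}$, choosing any $c \in A^G$ with $c\vert_{EM} = x$ (such $c$ exists trivially), we get $\eta_E(\tau_E^+(x)) = \nu(\tau_E^+(x)\vert_{N_0}) = \nu(\tau(c)\vert_{N_0}) = \tau^{-1}(\tau(c))(1_G) = c(1_G) = x(1_G)$, which is the required identity.

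The only point that needs care — and the main potential obstacle — is verifying that $\eta_E$ is genuinely \emph{well defined}, i.e. that $\nu(\tau_E^+(x)\vert_{N_0})$ does not depend on the choice of extension $c$ of $x$; this is immediate because $\tau(c)\vert_{N_0}$ is a function of $c\vert_{N_0 M} \subseteq c\vert_{EM} = x$ alone (using $N_0 M \subset EM$, which follows from $N_0 \subset E$), so no ambiguity arises. A secondary subtlety is making sure Lemma~\ref{l:techno-left-reversible} applies in our setting: it is stated for injective $\tau \in CA_{algr}(G,X,k)$ over an algebraically closed field, which is exactly our hypothesis, and the set $\Gamma$ there is now all of $A^G$ by Theorem~\ref{t:main-surjunctive-1}, so $\nu$ is defined on all of $A^{N_0}$. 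With these checks the lemma follows.
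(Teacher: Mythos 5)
Your proposal is correct and follows essentially the same route as the paper: both obtain $N$ from Lemma~\ref{l:techno-left-reversible}, use $\Gamma = A^G$ (from Theorem~\ref{t:main-surjunctive-1}) to define $\eta_E(z) = \tau^{-1}(\tilde z)(1_G)$ via an arbitrary extension $\tilde z$ of $z$ — your factorization through $\nu$ on $A^{N_0}$ is just an explicit repackaging of this — and verify the identity by extending $x$ to a global configuration $c$ and noting $\tau(c)\vert_E = \tau_E^+(x)$. Your well-definedness checks are exactly the points the paper relies on implicitly, so nothing is missing.
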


\begin{proof} 
Let $N\subset G$ be the finite subset given by  Lemma~\ref{l:techno-left-reversible}. Then for every $d \in A^G$, the element $\tau^{-1}(d)(1_G)$ depends only on $d\vert_N \subset A^N$. 
Therefore, for every finite subset $E \subset G$ containing $N$, we have a well-defined map: 
\begin{align*}
    \label{e:sur-1-1} 
    \eta_E \colon A^E \to A,\quad 
    z  \mapsto \tau^{-1}(\tilde{z})(1_G),
\end{align*}
where $\tilde{z} \in A^G$ is any configuration extending $z \in A^E$, i.e., $\tilde{z}\vert_{E}= z$.   
\par
Now, let $x \in A^{E M}$ and $z= \tau_{E}^+(x)$. Choose an arbitrary $\tilde{x} \in A^G$ such that $\tilde{x}\vert_{E M}=x$. It follows that $\tilde{z}= \tau(\tilde{x})$ extends the configuration $z$. Hence, we find without difficulties that:  
\begin{align*}
    \eta_E(\tau_{E}^+(x))= \eta(z) = \tau^{-1}(\tilde{z})(1_G)= \tilde{x}(1_G)=x(1_G).
\end{align*} 
\par 
The proof is thus complete. 
\end{proof}

\par 
We can now prove the first part of the conclusion of Theorem~\ref{t:surjunctive-invertible}. 
\par 

\begin{lemma}
$\tau$ is reversible. 
\end{lemma}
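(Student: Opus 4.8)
The plan is to use the already-established surjectivity of $\tau$ (from Theorem~\ref{t:main-surjunctive-1}) together with Lemma~\ref{l:sur-1} to build a local defining map for $\tau^{-1}$ on a finite memory set. Since $\tau$ is injective and surjective, it is bijective, so $\tau^{-1}\colon A^G\to A^G$ is a well-defined map; it only remains to check it is a cellular automaton, i.e. that $\tau^{-1}(d)(g)$ depends only on the restriction of $d$ to a finite translate of some fixed finite set, and one checks $G$-equivariance in the usual way. First I would take $N\subset G$ to be the finite set provided by Lemma~\ref{l:sur-1}, and fix any finite subset $E\subset G$ with $N\subset E$ and also $M\subset E$ (we may enlarge $N$ to contain $M$ at the outset, or just take $E=N\cup M$). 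Lemma~\ref{l:sur-1} then gives a map $\eta_E\colon A^E\to A$ with $\eta_E(\tau_E^+(x))=x(1_G)$ for all $x\in A^{EM}$.

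Next I would argue that $\eta_E$ is exactly a local defining map for $\tau^{-1}$ with memory set $E$. Concretely: given $d\in A^G$, since $\tau$ is surjective there is $c\in A^G$ with $\tau(c)=d$, and by injectivity $c=\tau^{-1}(d)$. Setting $x=c\vert_{EM}\in A^{EM}$, we have $\tau_E^+(x)=(\tau(c))\vert_E=d\vert_E$, so $\eta_E(d\vert_E)=\eta_E(\tau_E^+(x))=x(1_G)=c(1_G)=\tau^{-1}(d)(1_G)$. Thus $\tau^{-1}(d)(1_G)$ is computed from $d\vert_E$ by the fixed map $\eta_E$. To get the value at an arbitrary $g\in G$, I would invoke $G$-equivariance: $\tau$ commutes with the shift, hence so does $\tau^{-1}$, so $\tau^{-1}(d)(g)=(g^{-1}\tau^{-1}(d))(1_G)=\tau^{-1}(g^{-1}d)(1_G)=\eta_E\big((g^{-1}d)\vert_E\big)$. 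This is precisely the statement that $\tau^{-1}$ is a cellular automaton with memory set $E$ and local defining map $\eta_E$. Therefore $\tau$ is reversible.

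I do not expect a serious obstacle here: the real work has already been done in establishing surjectivity (Theorem~\ref{t:main-surjunctive-1}) and in Lemma~\ref{l:techno-left-reversible}/Lemma~\ref{l:sur-1}, which together encode the key finite-window dependence of $\tau^{-1}$. The only mild point to be careful about is bookkeeping with the two finite sets: Lemma~\ref{l:sur-1} requires $E\supseteq N$ and produces $\eta_E$ in terms of $A^{EM}$-data, so one must make sure the chosen $E$ is large enough (contains both $N$ and $M$) for the identity $\eta_E(\tau_E^+(x))=x(1_G)$ to be available, and that the same $E$ serves as the memory set in the equivariant reformulation. Once that is set up, the verification that $\eta_E$ defines $\tau^{-1}$ locally is a short and direct computation, essentially the displayed chain of equalities above.
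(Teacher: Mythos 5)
Your proposal is correct and follows essentially the same route as the paper: the paper likewise deduces reversibility directly from Lemma~\ref{l:sur-1} together with $G$-equivariance, concluding that $\tau^{-1}$ is a cellular automaton with local defining map $\eta_N$. You merely spell out the intermediate computation $\tau^{-1}(d)(1_G)=\eta_E(d\vert_E)$ that the paper treats as immediate.
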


\begin{proof}
By Lemma \ref{l:sur-1} and by $G$-equivariance of the map $\tau$, we find immediately that the inverse map $\tau^{-1} \colon A^G \to A^G$ is a cellular automaton which admits $\eta_N \colon A^N \to A$ as a local defining map.   
Therefore, $\tau$ is reversible. 
\end{proof}

\subsection{Invertibility} 
We continue with the notations and hypotheses as in Theorem~\ref{t:surjunctive-invertible}. We have seen that $\tau$ is bijective by Theorem~\ref{t:main-surjunctive-1}. 
Hence, it suffices to apply Theorem~\ref{t:inverse-also} to see that $\tau^{-1} \in CA_{algr}(G,X,k)$ when the base field $k$ has characteristic zero.
\par The proof of Theorem~\ref{t:surjunctive-invertible} is thus complete.\qed 

\section{Direct finiteness property of $CA_{algr}(G,X,k)$} 
\label{s:proof-direct-finite} 

The goal of this section is to explain two different proofs of Corollary~\ref{t:direct-weak-surjunctive} presented in the Introduction. 
\par 
For the proof, we fix a weakly surjunctive group $G$ and an algebraic group $X$ over an algebraically closed field $k$. Suppose that we are given  $\sigma, \tau \in CA_{algr}(G,X,k)$ with 
 $\sigma \circ \tau= \Id$. We are going to show that $\tau  \circ \sigma = \Id$ as well. 
 
\begin{proof}[First proof of Corollary~\ref{t:direct-weak-surjunctive}] 
Since every homomorphism of algebraic groups is trivially a morphism of algebraic varieties, we find that 
$CA_{algr}(G,X,k) \subset CA_{alg}(G,X,k)$ where $CA_{alg}(G,X,k)$ denotes the monoid of  cellular automata $\pi \colon X(k)^G \to X(k)^G$ which admit for some finite subset $M \subset G$ a local defining map $\rho \colon X(k)^M \to X(k)$ induced by a morphism of algebraic varieties $\chi \colon X^M \to X$, that is, $\rho= \chi \vert_{X(k)^M}$. 
\par 
Consequently, $\tau, \sigma \in \CA_{alg}(G,X,k)$ and satisfy $\sigma \circ \tau= \Id$. Then it suffices to apply the proof and notations of  
\cite[Theorem~C]{phung-geometric} to our  $\tau$ and $\sigma$ and observe that all the morphisms of algebraic varieties and morphisms of $R$-schemes involved in the proof can be taken to be homomorphisms of algebraic groups and homomorphisms of $R$-group schemes by using Lemma~\ref{l:model-finite-data}. 
\par 
Hence, with the notations in the proof of 
\cite[Theorem~C]{phung-geometric}, the induced injective cellular automata $\tau_{p,s,d}\colon T_{p,s,d}^G \to T_{p,s,d}^G$ become injective group cellular automata. But since $G$ is weakly surjunctive, the maps $\tau_{p,s,d}$ are surjective and the rest of the proof of \cite[Theorem~C]{phung-geometric} shows us that $\tau \circ \sigma= \Id$. The proof of Corollary~\ref{t:direct-weak-surjunctive} is thus complete. 
\end{proof}

\begin{proof}[Second proof of Corollary~\ref{t:direct-weak-surjunctive}]
Since we only work with $A=X(k)$, we can suppose without loss of generality that $X$ is reduced so that we can identify $X$ with $X(k)$. Let $M \subset G$ be a large enough  common symmetric memory set of both $\sigma$ and $\tau$ such that $1_G \in M$. Let $\mu, \eta \colon X^M \to X$ be the $k$-morphisms of algebraic varieties which induce the local defining maps of $\tau$ and $\sigma$ respectively. 
\par 
Let us define a $k$-homomorphism of algebraic groups $\tau_M^+ \colon X^{M^2} \to X^M$ defined by $\tau^+_M(c)(g)= \mu((g^{-1}c)\vert_M)$ for $c \in A^{M^2}$ and $g \in M$. Since $\sigma \circ \tau= \Id$, it follows immediately that 
\begin{equation}
    \label{e:direct-finite-proof-2-2}
    \eta \circ \tau_M^+ = \pi 
\end{equation}
where $\pi\colon X^{M^2} \to X^{\{1_G\}}$ is the canonical projection $x \mapsto x(1_G)$. \par 
We embed $k$ into some uncountable algebraically closed field $K$. By making the base change to $K$, we obtain the morphisms $\mu_K= \mu \otimes_k K$ and  $\eta_K= \eta \otimes_k K$ and a $K$-algebraic group $X_K= X\otimes_k K$. It is trivial that $\pi_K= \pi \otimes_k K \colon X_K^{M^2} \to X_K^{\{1_G\}}$ is the canonical projection to the factor $X^{\{1_G\}}$.
\par 
Consider the $K$-homomorphism of algebraic groups  $\tau^+_{K,M}=\tau^+_{M} \otimes_k K$. 
We infer from \eqref{e:direct-finite-proof-2-2} that 
$\eta_K \circ \tau^+_{K,M}= \pi_K$. Denote by $\tau_K, \sigma_K \in \CA_{algr}(G,X_K,K)$ the cellular automata admitting respectively $\mu_K$ and $\eta_K$ as local defining maps. Then the relation $\eta_K \circ \tau^+_{K,M}= \pi_K$ implies that  
$\sigma_K \circ \tau_K= \Id$. 
Therefore, $\tau_K$ is injective and Theorem~\ref{t:surjunctive-invertible} implies that $\tau_K$ is bijective. Consequently, $\tau_K \circ \sigma_K= \Id$.
\par 
Consider the $k$-homomorphism of algebraic groups $\sigma_M^+ \colon X^{M^2} \to X^M$ defined by $\sigma^+_M(c)(g)= \eta((g^{-1}c)\vert_M)$ for $c \in A^{M^2}$ and $g \in M$. 
Let $\sigma_{K,M}^+ \colon X_K^{M^2} \to X_K^M$ be defined by 
$\sigma_{K,M}^+= \sigma_M^+ \otimes_k K$. 
Then the relation $\tau_K \circ \sigma_K= \Id$ implies that $\mu_K \circ \sigma_{K,M}^+=\pi_K$. Since the field $k$ is algebraically closed, it follows from e.g. \cite[Lemma~7.2]{cscp-alg-ca}  that $\mu \circ \sigma_{M}^+=\pi$ and we can finally conclude that $\tau \circ \sigma= \Id$. The proof of Corollary~\ref{t:direct-weak-surjunctive} is thus complete. 
\end{proof}

\section{Linearly surjunctive groups and further applications}
\label{s:generalizations} 
Given a group $G$ and a vector space $A$ over a field $k$. Then $A^G$ is naturally a vector space over $k$ with pointwise operations. A cellular automaton $\tau \colon A^G \to A^G$ is said to be \emph{linear} if it is also a  $k$-linear map of vector spaces. \par 
Equivalently, we see that a cellular automaton is linear if it admits a linear local defining map. 
\par 
It is worth notice that linear cellular automata and linear subshifts are known to  admit many interesting interactions with group theory as well as ring theory (cf. e.g.  \cite{csc-sofic-linear}, \cite{bartholdi}, \cite{bartholdi-kielak}, \cite{phung-israel}, \cite{phung-dcds},  \cite{cscp-jpaa}). 
\par 
From our main results obtained in the above sections, it is natural to define the following general class of linearly surjunctive groups. 
\begin{definition}
We say that a group $G$ is a \emph{linearly surjunctive group} if for every finite dimensional vector space $A$ over a finite field $k$, all injective linear cellular automata $\tau \colon A^G \to A^G$ are  surjective. 
\end{definition}

\par 
Hence, it follows from the definitions and the Gromov-Weiss theorem (\cite{gromov-esav},  \cite{weiss-sgds})  which says that all sofic groups are surjunctive, we have the following relations between various classes of groups:  
\begin{align*}
\{ \text{sofic groups}\} & \subset 
\{ \text{surjunctive groups}\} \\
& \subset \{ \text{weakly surjunctive groups}\} \\
& \subset \{ \text{linearly surjunctive groups}\}.
\end{align*}
\par
It is not hard to see using a similar proof of Lemma~\ref{l:locally-surjunctive} that a group is linearly surjunctive if and only if it is locally linearly surjunctive. 
\par 
By working in the smaller category of finitely generated modules over a finitely generated $\Z$-algebra instead of the category of group schemes, we find that a straightforward modification of the proof of Theorem~\ref{t:surjunctive-invertible} shows the following result: 
\begin{theorem}
\label{t:linear-surjunctive-invertible}
Let $G$ be a linearly surjunctive group and let $A$ be a finite dimensional vector space over an uncountable algebraically closed field $k$. 
Suppose that a linear cellular automaton $\tau\colon A^G \to A^G$ is injective. Then $\tau$ is reversible and  $\tau^{-1}\colon A^G \to A^G$ is also a linear cellular automaton. 
\end{theorem}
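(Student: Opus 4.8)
The plan is to mirror the three-step structure used for Theorem~\ref{t:surjunctive-invertible}: first prove surjectivity of $\tau$, then deduce reversibility, and finally establish that the inverse is again a linear cellular automaton. Throughout, the main substitution is to replace the category of $R$-group schemes of finite type (for $R$ a finitely generated $\Z$-subalgebra of $k$) by the category of finitely generated $R$-modules, which is both simpler and closer to the object at hand. Concretely, fix a symmetric memory set $M\subset G$ with $1_G\in M$ and a $k$-linear local defining map $\mu\colon A^M\to A$; after passing to the subgroup generated by $M$ we may assume $G$ is finitely generated, with an exhausting increasing sequence $(E_n)_{n\in\N}$ of finite subsets containing $M$. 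For each $n$ form the $k$-linear map $\tau_{E_n}^+\colon A^{E_nM}\to A^{E_n}$ exactly as before, and set
\begin{equation*}
V_n \coloneqq \{x\in A^{E_nM}\colon x(1_G)\neq 0,\ \tau_{E_n}^+(x)=0\} = \Ker\tau_{E_n}^+ \setminus \Ker\pi_n,
\end{equation*}
a constructible (indeed locally closed) subset of the affine space $A^{E_nM}$. Since $k$ is uncountable and algebraically closed and the transition maps $p_{m,n}$ are linear (hence algebraic), the same projective-limit argument via \cite[Lemma~B.2]{cscp-alg-ca} shows that if all $V_n$ were nonempty we could build $c\in A^G$ with $c(1_G)\neq 0$ and $\tau(c)=0$, contradicting injectivity. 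So $V_n=\varnothing$ for some $n$, i.e.\ $\Ker\tau_{E_n}^+\subset\Ker\pi_n$.

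Next I would descend the data to a finitely generated $\Z$-algebra $R\subset k$: choose a finitely generated $R$-module $A_R$ with $A_R\otimes_R k\cong A$ and an $R$-linear map $T_R\colon A_R^{E_nM}\to A_R^{E_n}$ inducing $\tau_{E_n}^+$ after base change, with $\Ker T_R\subset\Ker\pi_R$ (possible after shrinking $\Spec R$, since kernels and inclusions of kernels of $R$-linear maps of finitely generated modules are preserved after inverting suitably many elements of $R$). Writing $S=\Spec R$, Lemma~\ref{l:Z-closed-point} expresses the closed points of the "$R$-module scheme" attached to $A_R^{E_n}$ — more precisely, of the affine $R$-scheme $\mathbf{A}(A_R)^{E_n}$ whose $R$-points are $A_R^{E_n}$ — as a union of finite sets $H_{p,s,d}^{E_n}$ over primes $p$, closed points $s\in S_p$, and $d\in\N$. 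For each such $(p,s,d)$ the reduction $\mu_s\coloneqq\mu_R\otimes_R\kappa(s)$ is $\kappa(s)$-linear, $H_{p,s,d}$ is a finite $\kappa(s)$-vector subspace of the fibre $A_s=A_R\otimes_R\kappa(s)$ stable under $\mu_s$, and one obtains a linear cellular automaton $\tau_{p,s,d}\colon H_{p,s,d}^G\to H_{p,s,d}^G$ over the finite field $\kappa(s)$. The inclusion $\Ker T_s\subset\Ker\pi_s$ forces $\tau_{p,s,d}$ to be injective, exactly as in the group case. Since $G$ is linearly surjunctive and $H_{p,s,d}$ is a finite-dimensional vector space over a finite field, $\tau_{p,s,d}$ is surjective, so $H_{p,s,d}^{E_n}\subset T_R(A_R^{E_nM})$. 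As $T_R(A_R^{E_nM})$ is constructible (Chevalley) and contains all closed points of the Jacobson scheme $\mathbf{A}(A_R)^{E_n}$, it equals the whole scheme; base change to $k$ gives surjectivity of $\tau_{E_n}^+$, hence $\Gamma_{E_n}=A^{E_n}$, and since $M$ can be taken arbitrarily large, $\Gamma$ is dense in the prodiscrete topology. Closedness of $\Gamma=\tau(A^G)$ — here I would invoke the linear analogue of \cite[Theorem~5.1]{phung-2020} (the image of a linear cellular automaton is closed, e.g.\ by a Mittag-Leffler argument on the finite-dimensional projections, cf.\ \cite{csc-sofic-linear}) — then yields $\tau(A^G)=A^G$.

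For reversibility and invertibility I would reuse the arguments of Lemma~\ref{l:techno-left-reversible} and Theorem~\ref{t:inverse-also}, which hold verbatim in the linear setting (or can simply be quoted, since linear cellular automata are in particular algebraic-group cellular automata over the additive group $\mathbf{G}_a^{\dim A}$ in characteristic zero, and over an arbitrary field a bijective linear cellular automaton has a linear inverse because the inverse is automatically $G$-equivariant and the local rule is a linear map reconstructed from finitely many coordinates): once $\tau$ is bijective, $\tau^{-1}$ admits a finite memory set and its local defining map $\eta_N$ is linear because it is obtained by restricting $\tau^{-1}$, which is a linear map, to $A^N$. The main obstacle I anticipate is not the finite-field surjunctivity input — that is handed to us by hypothesis — but the bookkeeping needed to set up the $R$-module models and the Jacobson/Chevalley density argument in the module category rather than the group-scheme category, together with verifying that the "image is closed" lemma genuinely transfers to the linear case without the algebraic-group structure; everything else is a faithful transcription of the proof of Theorem~\ref{t:main-surjunctive-1}.
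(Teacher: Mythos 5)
Your proposal is correct and follows essentially the same route as the paper, whose own proof consists precisely of running the argument for Theorem~\ref{t:surjunctive-invertible} \emph{mutatis mutandis} with finitely generated $R$-modules in place of $R$-group schemes and then observing that the inverse of a bijective linear cellular automaton is automatically linear. The details you supply --- the locally closed sets $V_n$, the descent to $R$-module models, the finite vector-space alphabets $H_{p,s,d}$ over finite fields (which is exactly where the linear-surjunctivity hypothesis enters), the Jacobson/Chevalley density step, and the closed-image property in the linear setting --- are exactly the ones the paper leaves implicit.
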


\begin{proof}
It suffices to follow, \emph{mutatis mutandis}, the same proof of Theorem~\ref{t:surjunctive-invertible} in Section~\ref{s:proof-main} to see that $\tau$ is reversible. We remark that the various $R$-group schemes and homomorphisms between them are now simply $R$-modules of finite type and homomorphisms of $R$-modules. For the second statement of the conclusion, observe that the inverse cellular automaton $\tau^{-1}\colon A^G\to A^G$ is automatically linear as the inverse of a linear map. 
\end{proof}
\par 
As an immediate consequence, we obtain: 

\begin{theorem}
\label{t:kap-linear-surjunctive} 
Kaplansky's stable finiteness theorem holds for every field $k$ and every linearly surjunctive group $G$. 
\end{theorem}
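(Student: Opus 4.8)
The plan is to translate the statement into a direct‑finiteness property of linear cellular automata and then descend from the uncountable algebraically closed case already handled by Theorem~\ref{t:linear-surjunctive-invertible}. Recall that a ring $R$ is stably finite exactly when $\Mat_n(R)$ is directly finite for all $n\in\N$. Fix a field $k$ and an integer $n$, and set $A=k^n$. By the standard ring isomorphisms $\Mat_n(k[G])\simeq\Mat_n(k)[G]$ (as in \cite[Lemma~9.4]{phung-2020}) and $\Mat_n(k)\simeq\End_k(A)$, together with the linear analogue of \cite[Proposition~9.3]{phung-2020} giving $\End_k(A)[G]\simeq\LCA(G,A,k)$, it suffices to prove that $\LCA(G,A,k)$ is directly finite for every finite dimensional $k$-vector space $A$. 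So let $\sigma,\tau\in\LCA(G,A,k)$ satisfy $\sigma\circ\tau=\Id$; the goal is $\tau\circ\sigma=\Id$.

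First I would embed $k$ into an uncountable algebraically closed field $K$ with $\car K=\car k$ (such a $K$ exists, e.g.\ $K=\overline{k(t_i:i\in I)}$ with $I$ uncountable). Choose a common symmetric memory set $M\subset G$ with $1_G\in M$ for $\sigma,\tau,\Id$, and let $\mu,\eta\colon A^M\to A$ be the $k$-linear local defining maps of $\tau$ and $\sigma$. Forming the $k$-linear map $\tau_M^+\colon A^{M^2}\to A^M$ as in Section~\ref{s:proof-direct-finite}, the relation $\sigma\circ\tau=\Id$ is equivalent to $\eta\circ\tau_M^+=\pi$, where $\pi\colon A^{M^2}\to A^{\{1_G\}}$ is the projection (exactly as in \eqref{e:direct-finite-proof-2-2}). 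Base change along the faithfully flat extension $k\hookrightarrow K$ preserves this identity, so the linear cellular automata $\tau_K,\sigma_K\in\LCA(G,A\otimes_k K,K)$ with local defining maps $\mu\otimes_k K,\ \eta\otimes_k K$ satisfy $\sigma_K\circ\tau_K=\Id$; in particular $\tau_K$ is injective.

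Since $G$ is linearly surjunctive and $K$ is an uncountable algebraically closed field, Theorem~\ref{t:linear-surjunctive-invertible} applies to $\tau_K$ and shows that $\tau_K$ is bijective, whence $\tau_K\circ\sigma_K=\Id$. Translating back through $\sigma_{K,M}^+=\sigma_M^+\otimes_k K$ (with $\sigma_M^+\colon A^{M^2}\to A^M$ built from $\eta$), this reads $\mu_K\circ\sigma_{K,M}^+=\pi_K$, i.e.\ the two $K$-linear maps $\mu_K\circ\sigma_{K,M}^+$ and $\pi_K$ from $(A\otimes_k K)^{M^2}$ to $A\otimes_k K$ coincide. These are the base changes of the $k$-linear maps $\mu\circ\sigma_M^+$ and $\pi$, and since $K/k$ is faithfully flat the natural map $\mathrm{Hom}_k(-,-)\to\mathrm{Hom}_K(-\otimes_k K,-\otimes_k K)$ on finite dimensional spaces is injective (cf.\ \cite[Lemma~7.2]{cscp-alg-ca}), so $\mu\circ\sigma_M^+=\pi$ already over $k$, which is precisely $\tau\circ\sigma=\Id$. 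Thus $\LCA(G,A,k)$ is directly finite and $k[G]$ is stably finite.

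I do not expect a genuine obstacle here: Theorem~\ref{t:linear-surjunctive-invertible} carries all the weight, and the argument is the linear counterpart of the ``Second proof of Corollary~\ref{t:direct-weak-surjunctive}'' with commutative algebraic groups replaced by finite dimensional vector spaces. The only care needed is bookkeeping — setting up the ring isomorphism $\LCA(G,A,k)\simeq\End_k(A)[G]$ and transporting direct finiteness along it, and verifying that the base change and faithfully flat descent steps for the $k$-linear local defining maps are valid over an arbitrary field $k$.
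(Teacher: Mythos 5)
Your proof is correct and follows essentially the same route as the paper: embed $k$ into an uncountable algebraically closed field $K$, identify $\Mat_n(K[G])$ with $\LCA(G,K^n)$, and apply Theorem~\ref{t:linear-surjunctive-invertible} to upgrade the one-sided inverse to a two-sided one. The only cosmetic difference is the descent back to $k$: the paper simply observes that the equation $\alpha\beta=1$ already holds in the subring $\Mat_n(k[G])\subset\Mat_n(K[G])$, so the faithfully flat descent of local defining maps you invoke is not needed.
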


\begin{proof}
We can clearly embed $k$ into some  uncountable algebraically closed field $K$. Let $n \geq 1$ be an integer and suppose that  $\alpha, \beta \in \mathrm{Mat}_n(k[G])$ satisfies $\beta \alpha= 1$. 
Since $\mathrm{Mat}_n(k[G]) \subset \mathrm{Mat}_n(K[G])$ and we have a ring isomorphism $\Phi \colon \mathrm{Mat}_n(K[G]) \to LCA(G,K^n)$ where $LCA(G,K^n)$ is the $K$-algebra of linear cellular automata $(K^n)^G \to (K^n)^G$ (cf. \cite[Corollary~8.7.8]{ca-and-groups-springer}). It follows that $\Phi(\beta)\circ \Phi(\alpha)= \Phi(\beta \alpha)= \Phi(1)=\Id$. In particular, the linear cellular automaton $\Phi(\alpha)$ is injective. Hence, Theorem~\ref{t:linear-surjunctive-invertible} implies that $\Phi(\alpha)$ is bijective and consequently, $\Phi(\alpha) \circ \Phi(\beta)=\Id$. We deduce that $\Phi(\alpha \beta)= \Id$ and thus $\alpha \beta=1$. The proof is  complete. 
\end{proof}
\par 
In \cite[Definition~1.1]{csc-sofic-linear}, the authors introduced the notion of \emph{$L$-surjunctive groups}: a group $G$ is {$L$-surjunctive} if for every finite dimensional vector space $V$ over a field $k$ (not necessarily finite), all injective linear cellular automata $\tau \colon V^G \to V^G$ are  surjective. They showed in \cite{csc-sofic-linear} that a group $G$ is $L$-surjunctive if and only if for every field $k$, the group ring $k[G]$ is stably finite.
In other words, a group satisfies Kaplansky's stable finiteness conjecture for every field if and only if it is $L$-surjunctive. By definition, observe that every $L$-surjunctive group is linearly surjunctive.  
\par 
As a direct application of Theorem~\ref{t:kap-linear-surjunctive}, we deduce from the above characterization of $L$-surjuntive groups that: 
\begin{theorem}
\label{t:L-surjunctive-linear} 
A group is $L$-surjunctive if and only if it is linearly surjunctive. In particular, a group $G$ is  linearly surjunctive if and only if  for every  field $k$, the group ring $k[G]$ is stably finite. \qed
\end{theorem}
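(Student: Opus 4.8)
The plan is to deduce the statement by chaining together three facts: the definitional inclusion $\{L\text{-surjunctive groups}\} \subset \{\text{linearly surjunctive groups}\}$, the stable finiteness result Theorem~\ref{t:kap-linear-surjunctive}, and the characterization of $L$-surjunctive groups established in \cite{csc-sofic-linear} and recalled immediately above the statement, namely that a group $G$ is $L$-surjunctive if and only if $k[G]$ is stably finite for every field $k$.

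For the forward implication I would simply observe that if $G$ is $L$-surjunctive, then \emph{a fortiori} every injective linear cellular automaton $\tau \colon A^G \to A^G$ with $A$ a finite dimensional vector space over a \emph{finite} field $k$ is surjective, since finite fields are in particular fields; hence $G$ is linearly surjunctive. This is immediate from comparing the two definitions and needs no further input.

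For the reverse implication, suppose $G$ is linearly surjunctive. Then Theorem~\ref{t:kap-linear-surjunctive} applies verbatim and yields that for \emph{every} field $k$ the group ring $k[G]$ is stably finite. Invoking the characterization of \cite{csc-sofic-linear} in the other direction, this forces $G$ to be $L$-surjunctive. The two implications together give the equivalence, and the closing ``in particular'' sentence then follows at once: a group $G$ is linearly surjunctive $\iff$ it is $L$-surjunctive $\iff$ $k[G]$ is stably finite for every field $k$.

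There is essentially no obstacle: all the substance is already packaged in Theorem~\ref{t:kap-linear-surjunctive} (which itself rests on the linear analogue Theorem~\ref{t:linear-surjunctive-invertible} of the main theorem, whose proof uses Lemma~\ref{l:Z-closed-point} to descend to finite residue fields) and in the cited ring-theoretic characterization of $L$-surjunctivity. The only point worth double-checking when writing the argument out is that restricting the definition of ``linearly surjunctive'' to finite fields really does suffice to feed Theorem~\ref{t:kap-linear-surjunctive} through the proof of Theorem~\ref{t:linear-surjunctive-invertible}; this is exactly what is guaranteed by the reduction to closed points of $\Z$-schemes of finite type in Lemma~\ref{l:Z-closed-point}, all of whose residue fields are finite.
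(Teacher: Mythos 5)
Your proposal is correct and follows essentially the same route as the paper: the inclusion of $L$-surjunctive groups in linearly surjunctive groups is definitional, and the converse is obtained by combining Theorem~\ref{t:kap-linear-surjunctive} with the characterization of $L$-surjunctivity via stable finiteness of $k[G]$ from \cite{csc-sofic-linear}, exactly as in the paper's (one-line) deduction.
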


\section{Weakly dual surjunctive groups} 
\label{s:weakly-dual-surjunctive}
Fix a group $G$ and a set $A$. Two configurations $x,y\in A^G$ are said to be \emph{asymptotic} if there exists a finite subset $E \subset G$ such that $x\vert_{G \setminus E}= y_{G \setminus E}$. 
Following \cite{kari-post-surjective}, a cellular automaton $\tau \colon A^ G \to A^G$ is said to be \emph{post-surjective} if for every $x, y \in A^G$ such that $y$ is asymptotic to $\tau(x)$, we can find $z \in A^G$ asymptotic to $x$ such that $\tau(z)=y$.  
It was shown in \cite[Theorem~2]{kari-post-surjective} that for every sofic group $G$ and every finite set $A$, every post-surjective cellular automaton $\tau \colon A^G \to A^G$ is pre-injective. 
\par 
Following \cite{kari-post-surjective}, we will say that a group $G$ is \emph{dual surjunctive} if for every finite alphabet $A$, all post-surjective cellular automaton $\tau \colon A^G \to A^G$ are pre-injective. As for weakly surjunctive groups, we introduce the following notions of weakly and linearly dual surjunctive groups which include all dual surjunctive groups and hence all sofic groups.   
\par
\begin{definition}
\label{d:weakly-dual} 
Let $G$ be a group. We say that $G$ is \emph{weakly dual surjunctive} if for every finite group $A$, every post-surjective group cellular automaton $ A^G \to A^G$ is pre-injective. 
Similarly, $G$ is \emph{linearly dual surjunctive} if for every finite dimensional vector space $A$ over a finite field, all linear post-surjective cellular automata $A^G \to A^G$ must be pre-injective. 
\end{definition} 
\par 
Note that by \cite[Theorem~1]{kari-post-surjective}, every pre-injective, post-surjective cellular automaton over a finite alphabet must be bijective and moreover reversible. 
\par 
We begin with the following basic observation: 
\begin{proposition}
\label{p:bartholdi-linearly-surjunctive-dual}
Let $G$ be a group. Then $G$ is linearly dual surjunctive if and only if it is linearly surjunctive.
\end{proposition}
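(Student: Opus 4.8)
The plan is to match, via a transpose operation, injectivity and surjectivity of a linear cellular automaton with, respectively, post-surjectivity and pre-injectivity of its transpose. Fix a finite field $k$ and a finite dimensional $k$-vector space $A$; since $k$ is finite, $A$ is a finite set, so $A^G$ is compact for the prodiscrete topology and the image of any linear cellular automaton $A^G \to A^G$ is closed. Given a linear cellular automaton $\tau \colon A^G \to A^G$ with finite memory set $M$ and linear local defining map $\mu \colon A^M \to A$, I would introduce the dual space $A^* = \mathrm{Hom}_k(A,k)$, again finite dimensional over $k$, and the transpose linear cellular automaton $\hat\tau \colon (A^*)^G \to (A^*)^G$, which has memory set $M^{-1}$ and local rule assembled from the transposes of the components of $\mu$; equivalently, $\hat\tau$ is characterized as the adjoint of $\tau$ for the natural perfect pairings $A^G \times (A^*)^{(G)} \to k$ and $A^{(G)} \times (A^*)^G \to k$ given by $\langle x, \varphi\rangle = \sum_{g \in G} \varphi(g)(x(g))$ (a finite sum, one of the two factors being finitely supported), where $A^{(G)} \subset A^G$ denotes the subspace of finitely supported configurations. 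I would record at once that $\widehat{\hat\tau} = \tau$, since $A^{**} = A$, and that $A \mapsto A^*$ is an involution of the class of finite dimensional vector spaces over finite fields.

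Next I would set up a short dictionary between the relevant properties of $\tau$ and of $\hat\tau$. For a linear cellular automaton, pre-injectivity of $\tau$ is equivalent to injectivity of $\tau|_{A^{(G)}}$ (take the difference of two asymptotic configurations with equal image), and post-surjectivity of $\tau$ is equivalent to surjectivity of $\tau|_{A^{(G)}}$ (if $y$ is asymptotic to $\tau(x)$, write $y - \tau(x) = w$ with $w$ finitely supported and solve $\tau(u) = w$). Since $(A^*)^G$ is the full algebraic dual of $A^{(G)}$, and the closed $k$-subspaces of $A^G$ are exactly the annihilators of subsets of $(A^*)^{(G)}$, the usual duality between a linear map and its adjoint — applied to $\tau$ and $\hat\tau|_{(A^*)^{(G)}}$, together with the closedness of $\mathrm{im}(\tau)$ — yields
\begin{align*}
\tau \ \text{injective} \quad &\Longleftrightarrow\quad \hat\tau \ \text{post-surjective},\\
\tau \ \text{surjective} \quad &\Longleftrightarrow\quad \hat\tau \ \text{pre-injective}.
\end{align*}
Concretely, $\tau$ is injective iff $\ker \tau = 0$ iff the adjoint $\hat\tau|_{(A^*)^{(G)}}$ is onto $(A^*)^{(G)}$, i.e.\ $\hat\tau$ is post-surjective; and $\tau$ is surjective iff its (closed) image is dense iff $\hat\tau|_{(A^*)^{(G)}}$ has trivial kernel, i.e.\ $\hat\tau$ is pre-injective.

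With the dictionary available the proposition is formal. If $G$ is linearly surjunctive and $\sigma \colon A^G \to A^G$ is a post-surjective linear cellular automaton, then $\hat\sigma$ is injective, hence surjective by linear surjunctivity applied to $A^*$, hence $\sigma = \widehat{\hat\sigma}$ is pre-injective; thus $G$ is linearly dual surjunctive. Conversely, if $G$ is linearly dual surjunctive and $\tau \colon A^G \to A^G$ is an injective linear cellular automaton, then $\hat\tau$ is post-surjective, hence pre-injective by linear dual surjunctivity applied to $A^*$, hence $\tau = \widehat{\hat\tau}$ is surjective; thus $G$ is linearly surjunctive. I expect the only real obstacle to lie in the second equivalence of the dictionary: it rests on the image of a linear cellular automaton being closed in the prodiscrete topology — immediate here because the alphabet is finite — and on the bookkeeping that confirms $\hat\tau$ is again a genuine linear cellular automaton over the same finite field $k$, so that the hypothesis on $G$ legitimately applies to it.
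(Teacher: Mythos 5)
Your proposal is correct, and it is in substance identical to the paper's proof: the paper disposes of this proposition by citing Bartholdi's duality theorem (\cite[Theorem~1.4]{bartholdi}), which is exactly the transpose construction $\tau \mapsto \hat\tau$ over the dual alphabet $A^*$ together with the dictionary (injective $\leftrightarrow$ post-surjective, surjective $\leftrightarrow$ pre-injective) that you establish. In effect you have supplied a self-contained proof of the cited result rather than invoking it; the one point worth double-checking in your write-up, the identification of closed subspaces of $A^G$ with annihilators of subsets of $(A^*)^{(G)}$, is standard for a finite alphabet (and can even be bypassed, since both equivalences already follow from the purely algebraic duality between a linear map $(A^*)^{(G)} \to (A^*)^{(G)}$ and its full algebraic dual $A^G \to A^G$).
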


\begin{proof}
It is a direct consequence of \cite[Theorem~1.4]{bartholdi}. 
\end{proof}
\par 
Combining Proposition~\ref{p:bartholdi-linearly-surjunctive-dual} and  Theorem~\ref{t:L-surjunctive-linear}, we obtain immediately the following characterizations of Kaplansky's stable finiteness conjecture: 

\begin{corollary}
Let $G$ be a group. Then the following are equivalent:
\begin{enumerate}[\rm (i)] 
    \item $G$ is $L$-surjunctive; 
    \item $G$ is linearly surjunctive; 
    \item $G$ is linearly dual surjunctive; 
    \item $G$ satisfies Kaplansky's stable finiteness conjecture, that is, for every field $k$, the group ring $k[G]$ is stably finite. \qed
\end{enumerate}
\end{corollary}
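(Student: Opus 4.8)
The plan is to deduce all four equivalences purely formally by assembling the results already established in this section together with the characterization of $L$-surjunctivity due to Ceccherini-Silberstein and Coornaert \cite{csc-sofic-linear}; no new argument is needed, so the proof is essentially a matter of chaining implications.

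Concretely, I would argue as follows. The equivalence (i)$\Leftrightarrow$(ii) is exactly Theorem~\ref{t:L-surjunctive-linear}; its non-trivial direction --- that linear surjunctivity tested only on finite-dimensional spaces over \emph{finite} fields already forces $L$-surjunctivity, and hence linear surjunctivity over \emph{all} fields --- was obtained via Theorem~\ref{t:kap-linear-surjunctive}. The equivalence (ii)$\Leftrightarrow$(iii) is precisely Proposition~\ref{p:bartholdi-linearly-surjunctive-dual}, which rests on \cite[Theorem~1.4]{bartholdi}. Finally, the equivalence (i)$\Leftrightarrow$(iv) is the theorem of \cite{csc-sofic-linear} recalled in the text (and recorded as the ``in particular'' clause of Theorem~\ref{t:L-surjunctive-linear}): $G$ is $L$-surjunctive if and only if $k[G]$ is stably finite for every field $k$. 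Putting (i)$\Leftrightarrow$(ii), (ii)$\Leftrightarrow$(iii) and (i)$\Leftrightarrow$(iv) together yields the stated cycle of equivalences.

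The main obstacle is, frankly, that there is no real obstacle at the level of this corollary: all the substance has already been discharged in Theorem~\ref{t:kap-linear-surjunctive}, Theorem~\ref{t:L-surjunctive-linear} and Proposition~\ref{p:bartholdi-linearly-surjunctive-dual}. The one point deserving a moment's care is the mismatch of quantifier conventions --- linear (dual) surjunctivity as defined here only quantifies over finite-dimensional vector spaces over \emph{finite} fields, whereas $L$-surjunctivity and Kaplansky stable finiteness range over \emph{arbitrary} fields --- so I would be careful to invoke Theorem~\ref{t:L-surjunctive-linear} explicitly (rather than appealing to the bare definitions) in order to bridge that gap. Accordingly the whole proof can reasonably be compressed to the single sentence ``Combine Theorem~\ref{t:L-surjunctive-linear} and Proposition~\ref{p:bartholdi-linearly-surjunctive-dual}.''
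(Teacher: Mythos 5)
Your proposal is correct and matches the paper exactly: the corollary is deduced immediately by combining Theorem~\ref{t:L-surjunctive-linear} (giving (i)$\Leftrightarrow$(ii)$\Leftrightarrow$(iv)) with Proposition~\ref{p:bartholdi-linearly-surjunctive-dual} (giving (ii)$\Leftrightarrow$(iii)). Your remark about quantifying over finite fields versus arbitrary fields, and hence routing through Theorem~\ref{t:L-surjunctive-linear} rather than the bare definitions, is the right point of care.
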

\par 
We also obtain the following direct finiteness property of endomorphisms of symbolic group varieties over weakly dual surjunctive group universes:  

\begin{theorem}
\label{t:stable-weakly-dual} 
Let $G$ be a weakly dual surjunctive group. Let $X$ be an algebraic group over an algebraically closed field $K$. Suppose that $\tau, \sigma \in CA_{algr}(G,X,K)$ satisfy $\sigma \circ \tau= \Id$. Then one has $\tau \circ \sigma= \Id$. 
\end{theorem}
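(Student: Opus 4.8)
The plan is to reduce Theorem~\ref{t:stable-weakly-dual} to the case of a \emph{finite} group alphabet, where the hypothesis that $G$ is weakly dual surjunctive applies directly; the device that makes dual surjunctivity relevant here is the elementary observation that a one-sided inverse of a group cellular automaton is automatically post-surjective.

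First I would record that observation. Let $B$ be any group, used as an alphabet, and let $\tau',\sigma'\colon B^G\to B^G$ be group cellular automata with $\sigma'\circ\tau'=\Id$. Given $x,y\in B^G$ with $y$ asymptotic to $\sigma'(x)$, set $w=\sigma'(x)^{-1}y$ (pointwise product), so that $w$ is asymptotic to the constant configuration $1_B^G$, and put $z=x\cdot\tau'(w)$. Since $\tau'$ is a group cellular automaton, $\tau'(1_B^G)=1_B^G$ and $\tau'$ sends asymptotic configurations to asymptotic configurations, so $\tau'(w)$ is asymptotic to $1_B^G$ and hence $z$ is asymptotic to $x$; moreover $\sigma'(z)=\sigma'(x)\,\sigma'(\tau'(w))=\sigma'(x)\,(\sigma'\circ\tau')(w)=\sigma'(x)\,w=y$. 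Thus $\sigma'$ is post-surjective, with no commutativity of $B$ needed. Consequently, if in addition $B$ is \emph{finite} and $G$ is weakly dual surjunctive, then $\sigma'$ is pre-injective, hence by \cite[Theorem~1]{kari-post-surjective} (pre-injective and post-surjective over a finite alphabet implies bijective) $\sigma'$ is bijective, so $\tau'=(\sigma')^{-1}$; in particular $\tau'\circ\sigma'=\Id$ and $\tau'$ is surjective.

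Next I would set up the spreading-out argument, following verbatim the first proof of Corollary~\ref{t:direct-weak-surjunctive} and the proof of \cite[Theorem~C]{phung-geometric}. We may assume $X$ reduced. Fix a symmetric memory set $M\subset G$ with $1_G\in M$ common to $\tau$ and $\sigma$, with local defining $K$-homomorphisms of algebraic groups $\mu_\tau,\mu_\sigma\colon X^M\to X$. By $G$-equivariance, $\sigma\circ\tau=\Id$ is equivalent to the identity $\mu_\sigma\circ\tau_M^+=\pi$ of $K$-homomorphisms of algebraic groups $X^{M^2}\to X$, where $\tau_M^+\colon X^{M^2}\to X^M$ is induced by $\mu_\tau$ and $\pi\colon X^{M^2}\to X^{\{1_G\}}$ is the projection $c\mapsto c(1_G)$; likewise the goal $\tau\circ\sigma=\Id$ amounts to $\mu_\tau\circ\sigma_M^+=\pi$. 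By Lemma~\ref{l:model-finite-data} and the standard limit arguments of \cite[\S 8.8]{ega-4-3}, one obtains a finitely generated $\Z$-subalgebra $R\subset K$, a reduced $R$-group scheme $X_R$ with $X=X_R\otimes_RK$, and $R$-homomorphisms of $R$-group schemes $\mu_{\tau,R},\mu_{\sigma,R}\colon X_R^M\to X_R$ base-changing to $\mu_\tau,\mu_\sigma$, such that (after enlarging $R$) the identity $\mu_{\sigma,R}\circ\tau_{M,R}^+=\pi_R$ already holds over $R$. For every prime $p$, every closed point $s$ of $\Spec R\otimes_\Z\F_p$ and every $d\in\N$, this identity restricts and base-changes, via the finite subgroup $H_{p,s,d}\subset X_s$ of \cite[Lemma~3.1]{phung-geometric}, to an identity $\sigma_{p,s,d}\circ\tau_{p,s,d}=\Id$ for group cellular automata $\tau_{p,s,d},\sigma_{p,s,d}\colon H_{p,s,d}^G\to H_{p,s,d}^G$ over the finite group alphabet $H_{p,s,d}$.

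Finally I would conclude. By the first step, every $\sigma_{p,s,d}$ is bijective and every $\tau_{p,s,d}$ is surjective. From this point the argument is word-for-word the remaining part of the first proof of Corollary~\ref{t:direct-weak-surjunctive} (equivalently, of the proof of \cite[Theorem~C]{phung-geometric}): surjectivity of all the $\tau_{p,s,d}$, together with Lemma~\ref{l:Z-closed-point} (the sets $H_{p,s,d}^{M^2}$ exhaust the closed points of the Jacobson scheme $X_R^{M^2}$), Chevalley's theorem, and the fact that the equality locus of two morphisms to the separated scheme $X_R$ is closed, forces, after base change to $K$, the identity $\mu_\tau\circ\sigma_M^+=\pi$, i.e.\ $\tau\circ\sigma=\Id$. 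The main obstacle is precisely that one cannot shortcut this spreading-out by embedding $K$ in an uncountable algebraically closed field and invoking Theorem~\ref{t:surjunctive-invertible}, as in the second proof of Corollary~\ref{t:direct-weak-surjunctive}: there is no available uncountable-alphabet analogue of that theorem for post-surjectivity, since weak dual surjunctivity only gives information for finite group alphabets. Hence the whole proof must pass through the $\Z$-model, and the only genuinely delicate point — the descent of the fibrewise identities over all finite residue fields back to $R$ and then to $K$ (using that localisation and base change to an algebraically closed field preserve reducedness) — is the bookkeeping already carried out in \cite{phung-geometric}; a minor check is that the post-surjectivity computation of the first step applies to the possibly non-abelian alphabets $H_{p,s,d}$, which it does.
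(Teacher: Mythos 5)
Your proposal is correct and follows essentially the same route as the paper: the key observation that a one-sided inverse of a group cellular automaton is post-surjective is exactly the paper's Lemma~\ref{l:weakly-dual-surjunctive-direct-finite} (with the same computation), and the reduction to finite group alphabets via the spreading-out machinery of \cite[Theorem~C]{phung-geometric}, followed by \cite[Theorem~1]{kari-post-surjective} to upgrade pre-injectivity plus post-surjectivity to bijectivity, is precisely how the paper concludes. Your closing remark that the uncountable-field shortcut of the second proof of Corollary~\ref{t:direct-weak-surjunctive} is unavailable here is also consistent with the paper's choice of argument.
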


As a direct application, we obtain:  
\begin{corollary}
\label{c:stable-weakly-dual} 
Let $G$ be a weakly dual surjunctive group and let $R$ be the endomorphism ring of a commutative algebraic group over an algebraically closed field. Then the group ring $R[G]$ is stably finite. 
\end{corollary}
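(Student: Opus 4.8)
The plan is to deduce Corollary~\ref{c:stable-weakly-dual} from Theorem~\ref{t:stable-weakly-dual} in exactly the same way that Corollary~\ref{c:intro-1} is deduced from Corollary~\ref{t:direct-weak-surjunctive} in the Introduction. Concretely, let $k$ be the algebraically closed field and $Y$ the commutative $k$-algebraic group with $R = \End_k(Y)$; fix $n \in \N$ and set $X = Y^n$. First I would invoke \cite[Lemma~9.5]{phung-2020} to identify $\mathrm{Mat}_n(R)$ with $\End_k(X)$, and \cite[Proposition~9.3]{phung-2020} to obtain the ring isomorphism $\End_k(X)[G] \simeq CA_{algr}(G,X,k)$. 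The point is that $X = Y^n$ is again a commutative $k$-algebraic group, so $CA_{algr}(G,X,k)$ falls under the hypotheses of Theorem~\ref{t:stable-weakly-dual}.

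Next I would show $CA_{algr}(G,X,k)$ is directly finite: given $\sigma, \tau \in CA_{algr}(G,X,k)$ with $\sigma \circ \tau = \Id$, Theorem~\ref{t:stable-weakly-dual} applied to the weakly dual surjunctive group $G$ yields $\tau \circ \sigma = \Id$. Transporting this along the ring isomorphism $\End_k(X)[G] \simeq CA_{algr}(G,X,k)$ shows $\End_k(X)[G] = \mathrm{Mat}_n(R)[G]$ is directly finite. Finally, the ring isomorphism $\mathrm{Mat}_n(R[G]) \simeq \mathrm{Mat}_n(R)[G]$ from \cite[Lemma~9.4]{phung-2020} transports direct finiteness to $\mathrm{Mat}_n(R[G])$; since $n$ was arbitrary, every square matrix over $R[G]$ with a one-sided inverse has a two-sided inverse, i.e. $R[G]$ is stably finite.

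I expect no real obstacle here: the entire argument is a verbatim repetition of the proof of Corollary~\ref{c:intro-1} with Corollary~\ref{t:direct-weak-surjunctive} replaced by Theorem~\ref{t:stable-weakly-dual}, so the only thing to check is that the replacement is legitimate — that is, that $G$ weakly dual surjunctive still guarantees the direct finiteness statement for $CA_{algr}(G,X,k)$, which is precisely the content of Theorem~\ref{t:stable-weakly-dual}. The mild subtlety worth one sentence is that $X = Y^n$ remains commutative and algebraic so that Theorem~\ref{t:stable-weakly-dual} genuinely applies; beyond that the proof is purely formal ring-theoretic bookkeeping via the three cited lemmata of \cite{phung-2020}.
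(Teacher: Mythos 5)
Your proposal is correct and is essentially identical to the paper's own proof, which likewise just reruns the argument for Corollary~\ref{c:intro-1} with Corollary~\ref{t:direct-weak-surjunctive} replaced by Theorem~\ref{t:stable-weakly-dual}, using the same three results from \cite{phung-2020}. (Your one ``subtlety'' is even weaker than you suggest: Theorem~\ref{t:stable-weakly-dual} does not require $X$ commutative, only that it be an algebraic group over an algebraically closed field, so $X=Y^n$ qualifies automatically.)
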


\begin{proof}
It suffices to follow the exact same proof of Corollary~\ref{c:intro-1}. The only modification needed in the proof is that instead of using  Corollary~\ref{t:direct-weak-surjunctive}, we  apply Theorem~\ref{t:stable-weakly-dual}. 
\end{proof}
\par 
For the proof of Theorem~\ref{t:stable-weakly-dual}, we shall  establish first the following elementary result. 
\begin{lemma}
\label{l:weakly-dual-surjunctive-direct-finite}
Let $G$ and $A$ be groups and let $\tau, \sigma \colon A^G\to A^G$ be group cellular automata such that $\sigma \circ \tau= \Id$. Then $\sigma$ is post-surjective. 
\end{lemma}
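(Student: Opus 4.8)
The plan is to produce, for given asymptotic data, an explicit preimage built from the pointwise group structure on $A^G$ and from the fact that $\tau$ is a section of $\sigma$. First I would record two elementary facts about group cellular automata. Since $\tau$ and $\sigma$ are group cellular automata, the maps $\tau,\sigma\colon A^G\to A^G$ are homomorphisms for the pointwise group structure on $A^G$; in particular each of them sends the neutral configuration $e^G$ to $e^G$, as any group homomorphism preserves neutral elements. Secondly, a group cellular automaton carries configurations asymptotic to $e^G$ to configurations asymptotic to $e^G$: if $c\in A^G$ satisfies $c\vert_{G\setminus E}=e^G\vert_{G\setminus E}$ for a finite $E\subset G$, and $M$ is a memory set with local defining homomorphism $\mu\colon A^M\to A$, then for every $g\notin EM^{-1}$ one has $(g^{-1}c)\vert_M=e^G\vert_M$, so the value of the image at $g$ is $\mu(e^G\vert_M)=e$; hence the image of $c$ is supported on the finite set $EM^{-1}$.

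With these in hand the construction is immediate. Let $x,y\in A^G$ with $y$ asymptotic to $\sigma(x)$, and set $w\coloneqq y\,\sigma(x)^{-1}\in A^G$. Then $w(g)=e$ precisely when $y(g)=\sigma(x)(g)$, so $\supp(w)$ is finite, i.e. $w$ is asymptotic to $e^G$. Define $z\coloneqq \tau(w)\,x\in A^G$. Since $\sigma$ is a homomorphism and $\sigma\circ\tau=\Id$, we get $\sigma(z)=\sigma(\tau(w))\,\sigma(x)=w\,\sigma(x)=y$. Moreover $z\,x^{-1}=\tau(w)$, which by the second fact above is asymptotic to $e^G$; hence $z$ is asymptotic to $x$. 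This is exactly the configuration required by the definition of post-surjectivity, so $\sigma$ is post-surjective.

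I do not expect a genuine obstacle here. The only point that actually uses the group hypothesis in an essential way, rather than as formal bookkeeping, is the second elementary fact: the local rule, being a group homomorphism $A^M\to A$, sends the constant tuple $e^G\vert_M$ to $e$, which is what keeps the image of a finitely supported configuration finitely supported. Everything else is a direct computation in the group $A^G$ together with the relation $\sigma\circ\tau=\Id$.
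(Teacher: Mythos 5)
Your proof is correct and is essentially the paper's own argument: both construct the preimage by applying $\tau$ to the ``difference'' between $y$ and $\sigma(x)$ and multiplying it against $x$, using that $\sigma,\tau$ are homomorphisms for the pointwise group structure with $\sigma\circ\tau=\Id$ (the paper writes $z=x\,\tau(\sigma(x)^{-1}y)$ where you write $\tau(y\,\sigma(x)^{-1})\,x$, a purely cosmetic left/right difference). Your explicit memory-set computation showing that a group cellular automaton preserves asymptoticity to $e^G$ is just a spelled-out version of the paper's appeal to uniform continuity plus $\tau(e^G)=e^G$.
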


\begin{proof}
Suppose that $x,y \in A^G$ with $y$ asymptotic to $\sigma(x)$. Since $\tau$ is uniformly continuous as a cellular automaton, we deduce that $\tau(y)$ and $\tau(\sigma(x))$ are also asymptotic. Consequently, the configuration $z=\tau(\sigma(x)^{-1}y)$ is asymptotic to $e^G$ where $e$ is the neutral element of the group $A$. It follows in particular that $xz$ is aysmptotic to $x$. 
Since $\tau$ and $\sigma$ are group homomorphisms and $\sigma \circ \tau= \Id$, we find that 
\begin{align*}
    \sigma(xz)= \sigma(x) \sigma(z)= \sigma(x) \sigma(\tau(\sigma(x)^{-1}y)) = 
    \sigma (x) \sigma(x)^{-1}y= y.
\end{align*}
\par 
The proof is thus complete. 
\end{proof}
\par 
We are now in the position to give the proof of Theorem~\ref{t:stable-weakly-dual}. 

\begin{proof}[Proof of Theorem~\ref{t:stable-weakly-dual}] 
As for the first proof of  Corollary~\ref{t:direct-weak-surjunctive}, we proceed by applying the constructions and notations introduced in the proof of 
\cite[Theorem~C]{phung-geometric} to our  $\tau$ and $\sigma$. Again, observe that the various  morphisms of algebraic varieties and morphisms of $R$-schemes involved are now respectively  homomorphisms of algebraic groups and homomorphisms of $R$-group schemes by using Lemma~\ref{l:model-finite-data}.
\par 
Let $p$ be a prime, let $s \in A_s$ be a closed point, and let $d \in \N$. Then as for $\tau_{p,s,d} \colon T_{p,s,d}^G \to T_{p,s,d}^G$, we also have a well-defined group cellular automaton 
$\sigma_{p,s,d}=\sigma_{s} \vert_{T_{p,s,d}^G} \colon T_{p,s,d}^G \to T_{p,s,d}^G$ which admits  $\eta_{s}\vert_{T_{p,s,d}^M} \colon T_{p,s,d}^M \to T_{p,s,d}$ as a local defining map. 
\par 
Since $\sigma_s \circ \tau_s=\Id$, we deduce that  $\sigma_{p,s,d} \circ \tau_{p,s,d}=\Id$. We infer from Lemma~\ref{l:weakly-dual-surjunctive-direct-finite} that $\sigma_{p,s,d}$ is a post-surjective group cellular automata. But as the group $G$ is weakly dual surjunctive by hypothesis,  $\sigma_{p,s,d}$ and $\tau_{p,s,d}$ are mutual inverses by \cite[Theorem~1]{kari-post-surjective}. It follows that $\tau_{p,s,d}\circ \sigma_{p,s,d}=\Id$. Therefore, we deduce immediately that $\tau$ is surjective and again, the rest of the proof of \cite[Theorem~C]{phung-geometric} allows us to conclude that $\tau \circ \sigma= \Id$.
\par 
The proof of Theorem~\ref{t:stable-weakly-dual} is thus complete. 
\end{proof}

\bibliographystyle{siam}

\end{document}